\newtheorem{theorem}{Theorem}
\newtheorem{definition}[theorem]{Definition}
\newtheorem{lemma}[theorem]{Lemma}
\newtheorem{corollary}[theorem]{Corollary}
\newtheorem{proposition}[theorem]{Proposition}
\newtheorem{remark}[theorem]{Remark}
\newcommand{\R}{\mathbb R}
\newcommand{\M}{\mathbb M}
\numberwithin{theorem}{section}
\numberwithin{equation}{section}
\begin{document}
\title[Penrose with respect to static spacetimes]{A quasi-local Penrose inequality for the quasi-local energy with static references}
\author{Po-Ning Chen}
\date{}
\begin{abstract} 
The positive mass theorem is one of the fundamental results in general relativity. It states that, assuming the dominant energy condition, the total mass of an asymptotically flat spacetime is non-negative.
The Penrose inequality provides a lower bound on mass by the area of the black hole and is closely related to the cosmic censorship conjecture in general relativity. In \cite{Lu-Miao}, Lu and Miao proved a quasi-local Penrose inequality for the quasi-local energy with reference in the Schwarzschild manifold. In this article, we prove a quasi-local Penrose inequality for the quasi-local energy with reference in any spherically symmetric static spacetime. 
\end{abstract}
\thanks{The author would like to thank Siyuan Lu, Pengzi Miao, Mu-Tao Wang, Ye-Kei Wang and Xiangwen Zhang for helpful discussions. The author is supported by NSF grant DMS-1308164 and Simons Foundation collaboration grant \#584785. Part of this work was carried out when the author was visiting the National Center of Theoretical Sciences at National Taiwan University in Taipei, Taiwan.} 
\maketitle
\section{Introduction}
In general relativity, a spacetime is a 4-dimensional manifold $N$ with a Lorentz metric $g_{\alpha \beta}$ and a symmetric 2-tensor $T_{\alpha \beta}$. The metric represents the gravitation field while the symmetric 2-tensor represents the stress-energy density of matter in the spacetime. The Einstein equation relates the geometric information to the matter field. It reads
\[ R_{\alpha \beta} - \frac{R}{2} g_{\alpha \beta} = 8 \pi T_{\alpha \beta},  \]
where $R_{\alpha \beta}$ and $R$ are the Ricci curvature and the scalar curvature of the metric, respectively. Throughout this article, we assume that the matter field satisfies the dominant energy condition. Namely, for any time like field $e_0$, $T(e_0,e_0) \ge 0$ and $T(e_0,\cdot)$  is a non-space-like co-vector.

An initial data set consists of $(M, g_{ij}, k_{ij})$ where $g_{ij}$ is the induced metric and $k_{ij}$ is the second fundamental form of the hypersurface $M$ in the spacetime N. For an asymptotically flat initial data set, Arnowitt, Deser, and Misner defined the total Arnowitt-Deser-Misner (ADM) energy-momentum $(E,P_i)$ using the asymptotic symmetry at infinity \cite{ADM}. One of the fundamental results in general relativity is the positive mass theorem by Schoen-Yau \cite{SY1,SY2} and Witten \cite{W}. It states that the ADM energy-momentum satisfies 
\[  E \ge \sqrt{\sum_i P_i^2}, \]
if the spacetime satisfies the dominant energy condition. Moreover, the equality holds if and only if the spacetime is the Minkowski space.

An important situation is the time-symmetric initial data sets where $k_{ij}= 0$. In this case, the dominant energy condition implies that the scalar curvature of $M$ is non-negative. Moreover, the ADM linear momentum vanishes and the ADM energy is the same as the ADM mass.

The Penrose inequality is a conjuncture on the lower bound of the mass of an isolated system in terms of the total area of its black holes \cite{Penrose}. Its validity is related to the cosmic censorship conjecture on the global structure of solutions to the Einstein equation. An important special case is the Riemannian Penrose inequality for time-symmetric asymptotically flat initial data sets. Namely, if $(M,g)$ is an asymptotically flat Riemannian 3-manifold with nonnegative scalar curvature and ADM mass $m$, and $A$ is the area of the outermost minimal surface (possibly with multiple connected components), then the Riemannian Penrose inequality asserts that 
\[
m \ge \sqrt{\frac{A}{16 \pi}}.
\]

Huisken and Ilmanen proved the inequality where $A$ is the area of the largest component of the outermost minimal surface \cite{Huisken-Ilmanen}. Their proof used the monotonicity of the Hawking mass under the inverse mean curvature flow. Bray gave the complete proof of the above inequality using a conformal flow of metrics \cite{Bray}. 

Following the proof of the positive mass theorem, there have been many different attempts to define quasi-local mass. See \cite{Szabados} for earlier works on defining quasi-local mass and other quasi-local conserved quantities. Here we briefly review several notions of quasi-local mass using the Hamilton-Jacobi approach and their positive mass theorems. The basic idea is that we should compare the surface Hamiltonian \cite{BY,HH} of the surface inside the spacetime $N$  to the surface Hamiltonian of suitable configurations of the same surface inside a chosen reference spacetime. 

For time-symmetric initial data sets, Brown and York defined the Brown-York mass in \cite{BY}. Suppose $(M,g)$ is a 3-manifold such that $\Sigma$, the boundary of $M$, has a positive Gauss curvature. From the solution of Weyl’s isometric embedding problem by Nirenberg \cite{Nirenberg}
 and independently, Pogorelov \cite{Pogorelov}, there exists an unique isometric embedding of $\Sigma$ into $\R^3$. Let $H$ and $H_0$ be the mean curvatures of $\Sigma$ in $M$ and $\R^3$, respectively. 

 The Brown-York mass of $\Sigma$ is 
\[
m_{BY}(\Sigma) = \frac{1}{8 \pi} \int_{\Sigma} H_0 - H .
\]
 Shi and Tam proved the following positive mass theorem for the Brown-York mass:
\begin{theorem}
Suppose the scalar curvature of $M$ is non-negative and $H >0$. Then the Brown-York mass of $\Sigma$ is non-negative, and it equals zero if only if $M$ is flat.
\end{theorem}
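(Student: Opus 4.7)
The plan is to follow the Shi--Tam strategy: extend $(M,g)$ past $\Sigma$ by an asymptotically flat manifold of zero scalar curvature that matches the boundary data, arrange that the mean curvature does not jump upward across $\Sigma$ (so the glued manifold has weakly non-negative scalar curvature), and compare the Brown--York mass on $\Sigma$ with the ADM mass of the extension via a monotonicity formula along a foliation.

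First I would use Weyl's theorem to isometrically embed $\Sigma$ into $\R^3$ as a strictly convex surface $\Sigma_0$. The exterior region is foliated by the outward parallel surfaces $\Sigma_r$, and the flat metric writes as $g_0 = dr^2 + g_r$. I then look for a new metric $\bar g = u^2 dr^2 + g_r$ on this exterior with $u>0$. The mean curvature of $\Sigma_r$ with respect to $\bar g$ is $H_0/u$, where $H_0$ denotes the flat mean curvature; so prescribing the boundary value $u|_{\Sigma} = H_0/H$ matches the mean curvature of $\Sigma$ inside $M$. Writing out $R(\bar g)=0$ gives a quasi-linear parabolic equation for $u$ in $r$ (essentially $2H_0\partial_r u = u^2 \Delta_{g_r} u - \tfrac{1}{2}(R_r)u(u^2-1)$, where $R_r$ is the scalar curvature of $g_r$). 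I would quote Shi--Tam's long-time existence and decay estimates to conclude $u-1 = O(1/r)$ and that the extension is asymptotically flat with a well-defined ADM mass.

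The core step is the monotonicity formula. Define
\[
m(r) = \frac{1}{8\pi}\int_{\Sigma_r} H_0\!\left(1-\frac{1}{u}\right) d\sigma_r.
\]
At $r=0$, $H_0(1-1/u) = H_0 - H$, so $m(0) = m_{BY}(\Sigma)$. Differentiating $m(r)$ along the foliation, substituting the parabolic equation for $u$, integrating by parts, and applying Gauss--Bonnet to identify the integral of $R_r$ with the Euler characteristic, one obtains $m'(r) \le 0$, with equality only when $u\equiv 1$ on $\Sigma_r$. Moreover, using the asymptotic expansion of $u$ one identifies $\lim_{r\to\infty} m(r)$ with the ADM mass $m_{ADM}$ of the extension $(\R^3\setminus \mathrm{int}(\Sigma_0), \bar g)$. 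This gives $m_{BY}(\Sigma) \ge m_{ADM}$.

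Finally I would glue $(M,g)$ and the extension along $\Sigma$. Since the induced metrics agree and $H_{\bar g} = H$ on $\Sigma$ by construction, no singular scalar curvature is introduced, and the resulting complete asymptotically flat manifold has non-negative scalar curvature in the distributional sense. The positive mass theorem of Schoen--Yau (or Witten) applied to this manifold, combined with an approximation/smoothing argument to handle the merely Lipschitz metric across $\Sigma$, yields $m_{ADM}\ge 0$, hence $m_{BY}(\Sigma)\ge 0$. For rigidity, $m_{BY}(\Sigma)=0$ forces $m'(r)\equiv 0$, so $u\equiv 1$ and the extension is Euclidean; then $m_{ADM}=0$ and the equality case of the positive mass theorem forces the glued manifold to be $\R^3$, whence $M$ is flat. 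The main obstacle I anticipate is the monotonicity computation together with ensuring the gluing/smoothing argument correctly preserves the weak non-negativity of scalar curvature needed to invoke the positive mass theorem.
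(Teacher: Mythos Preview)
Your proposal is correct and follows exactly the Shi--Tam strategy the paper reviews in Section~2: the quasi-spherical extension $g_u=u^2\,ds^2+\sigma_s$ on the exterior of the Weyl embedding with $R(g_u)=0$ and initial value $u|_{\Sigma_0}=H_0/H$, the monotonicity of $\tfrac{1}{8\pi}\int_{\Sigma_s}(H_0-H_u)\,d\sigma_s$ with limit $m_{ADM}(g_u)$, and the positive mass theorem on the glued manifold with matching mean curvatures across $\Sigma$. The only small inaccuracy is the appeal to Gauss--Bonnet: what the monotonicity computation actually uses is the pointwise positivity of $\det(A_0)$ (equivalently the Gauss curvature of each $\Sigma_s$), which comes from convexity of the parallel surfaces, not an integral identity.
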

The proof by Shi and Tam used Bartnik's quasi-spherical metric \cite{Bartnik} to construct an extension of $(M,g)$ to an asymptotically flat manifold on which the Brown-York mass is non-increasing. The positivity of the Brown-York mass follows from the positive mass theorem (for manifolds with a certain type of singularities).

Liu and Yau defined the Liu-Yau mass which does not depend on the choice of the hypersurface $M$ \cite{Liu-Yau1,Liu-Yau2}. Suppose $\Sigma$ is an embedded 2-sphere with positive Gauss curvature that bounds a spacelike hypersurface in a spacetime $N$. The Liu-Yau mass is
\[
m_{LY}(\Sigma) = \frac{1}{8 \pi} \int_{\Sigma} H_0 - |H|, 
\]
where $H$ is the mean curvature vector of $\Sigma$ in the spacetime $N$.

Liu and Yau proved the following positive mass theorem:
\begin{theorem}
Suppose $N$ satisfies the dominant energy condition and $|H| >0$. Then the Liu-Yau mass of $\Sigma$ is non-negative, and it equals zero if only if $N$ is the Minkowski space along $\Sigma$.
\end{theorem}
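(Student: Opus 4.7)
The plan is to reduce the theorem to the Shi-Tam Brown-York positive mass theorem quoted above. Because $\Sigma$ has positive Gauss curvature, the isometric embedding into $\R^3$ and the resulting mean curvature $H_0$ are already at hand; it therefore suffices to construct a compact Riemannian three-manifold $(\widetilde M,\widetilde g)$ of non-negative scalar curvature whose boundary is $\Sigma$ with its given induced metric and along which the boundary mean curvature equals $|H|$. Shi-Tam's theorem applied to such an $(\widetilde M,\widetilde g)$ would then give $m_{LY}(\Sigma)=\frac{1}{8\pi}\int_\Sigma(H_0-|H|)=m_{BY}(\Sigma;\widetilde M,\widetilde g)\ge 0$.

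To build $(\widetilde M,\widetilde g)$, pick any spacelike hypersurface $M_0\subset N$ with $\partial M_0=\Sigma$ and induced initial data $(g_0,k_0)$, and solve Jang's equation on $M_0$: find $f\colon M_0\to\R$ whose graph $\widetilde M\subset M_0\times\R$ satisfies $H_{\widetilde M}-\mathrm{tr}_{\widetilde M}k_0=0$, with $k_0$ extended vertically. I would impose Dirichlet data on $\Sigma$ so that the second fundamental form $\widetilde k$ of $\widetilde M$ in $N$ has vanishing trace along $\Sigma$. Under this condition the decomposition of the spacetime mean curvature vector in the Lorentzian normal frame of $\widetilde M$ becomes purely spatial, and yields $|H|$ equal to the Riemannian mean curvature of $\Sigma$ in $(\widetilde M,\widetilde g)$. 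By Schoen-Yau's Jang identity and the dominant energy condition, the scalar curvature of $\widetilde g$ satisfies $\widetilde R\ge 2|X|^2-2\,\mathrm{div}(X)$ for an explicit vector field $X$ built from $\nabla f$ and $k_0$; one either conformally deforms $\widetilde g$ away from $\Sigma$ to absorb the divergence term, or threads $X$ through the Shi-Tam quasi-spherical extension argument as in Liu-Yau. Shi-Tam's theorem then delivers $m_{LY}(\Sigma)\ge 0$, and the rigidity case follows by tracing Shi-Tam's rigidity back through the Jang graph: $\widetilde g$ is flat, so $f$ is constant and $k_0$ vanishes along $\Sigma$, after which a Gauss-Codazzi argument identifies a neighborhood of $\Sigma$ in $N$ with a piece of Minkowski space.

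The main obstacle is the Jang equation on a bounded domain with Dirichlet data on $\Sigma$ tuned to force $\mathrm{tr}_\Sigma\widetilde k=0$ while retaining solvability. Solutions can blow up along apparent horizons in the interior of $M_0$, producing cylindrical ends that must be reconciled with the compact-boundary hypothesis of Shi-Tam. The assumption $|H|>0$ is exactly what prevents horizons tangent to $\Sigma$ and is expected to supply the $C^1$ control on $f$ near $\Sigma$ needed for the Dirichlet problem; managing the interplay between possible interior blowup of $f$, the construction of $\widetilde M$ as a Riemannian manifold with boundary, and the Shi-Tam extension past those cylindrical ends is the technical heart of the argument.
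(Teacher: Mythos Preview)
The paper does not prove this theorem. It is stated in the introduction as a known result, attributed to Liu and Yau with citations \cite{Liu-Yau1,Liu-Yau2}, and serves only as background for the paper's own quasi-local Penrose inequality. There is therefore no proof in the paper to compare your proposal against.

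That said, your outline is essentially the Liu--Yau argument from the cited references: solve Jang's equation on a spacelike fill-in with boundary data chosen so that the mean curvature of $\Sigma$ in the Jang graph equals $|H|$, use the Schoen--Yau scalar-curvature identity together with the dominant energy condition, and feed the result into a Shi--Tam type extension. One point to sharpen: the correct boundary condition is not phrased as ``$\mathrm{tr}_\Sigma \widetilde k = 0$'' but rather as a Dirichlet condition on $f$ (in fact $f=0$ on $\Sigma$ in Liu--Yau), after which one shows the mean curvature of $\Sigma$ in the Jang graph is at least $|H|$; equality is not needed for the inequality, only the one-sided bound. The divergence term $-2\,\mathrm{div}(X)$ is handled by carrying it through the quasi-spherical construction as a boundary term, not by conformal deformation. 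Your identification of interior blow-up along apparent horizons as the main technical obstacle is accurate, and Liu--Yau resolve it by capping the cylindrical ends and applying the positive mass theorem with corners.
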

In \cite{Wang-Yau1,Wang-Yau2}, Wang and Yau defined the Wang-Yau quasi-local mass which captures the full information of the energy-momentum of a surface in a Lorentzian manifold. An energy is assigned to each isometric embedding of the surface into the Minkowski space and a timelike vector in Minkowski space. Such a pair is considered to be an observer. Wang and Yau showed that the assigned energy is always non-negative and the Wang-Yau quasi-local mass is the minimum among all possible assigned energy.

In \cite{Chen-Wang-Wang-Yau}, a new quasi-local energy is defined using any static spacetime as the reference. This work generalizes the Wang-Yau quasi-local mass which used the Minkowski space as the reference. The new quasi-local energy provides a measurement of how far a spacetime deviates away from the exact static solution. 

The static slice of a static spacetime is referred to as a static manifold. It consists of a triple $(\M, \bar g, V)$ where $\bar g$ is the induced metric on the 3-manifold $\M$ and $V$ is a function (the static potential) on $\M$ satisfying 
\[
\bar \Delta V  \bar g  - \bar \nabla^2 V +  V \bar Ric \ge 0,
\]
where $\bar \nabla$, $\bar \Delta$ and $\bar Ric$ are the Levi-Civita connection, Laplacian and Ricci curvature of the metric $\bar g$, respectively. 

For an isometric embedding into $\M$, the quasi-local energy is simply
\[
\frac{1}{8 \pi} \int_{\Sigma} V (H_0 - |H|),
\]
where $H_0$ is the mean curvature of the image in $\M$. For this article, we focus on the time-symmetric case. Namely, $\Sigma$ bounds a hypersurface $M$ with vanishing second fundamental form. Under this assumption, the quasi-local energy is then
\[
\frac{1}{8 \pi} \int_{\Sigma} V (H_0 - H),
\]
where $H$ is the mean curvature of $\Sigma$ in $M$.

A natural candidate for the reference space is the Schwarzschild spacetime. In \cite{Li-Wang}, Li and Wang found the following lower bound 
\[
\frac{1}{8 \pi} \int_{\Sigma} V (H_0 - H) \ge -m
\]
assuming the image of the isometric embedding is sufficiently close to a coordinate sphere in the Schwarzschild manifold. It is natural to ask whether we can improve the lower bound by the area of the enclosed horizon similar to the Penrose inequality. In \cite{Lu-Miao}, Lu and Miao proved the following quasi-local Penrose inequality (here we state their result for 3-manifold only):
\begin{theorem}  
Let  $(M, g)$ be a compact, connected,  orientable, 3-manifold  with nonnegative scalar curvature, with boundary $\partial M$. 
Suppose the boundary is the union of  $ \Sigma $ and $ \Sigma_h$, where 
\begin{enumerate}
\item[(i)]   $\Sigma$ has positive mean curvature $H > 0$; and  
\item[(ii)]  $\Sigma_h$,  if nonempty, is a minimal surface and there are no other closed  minimal surfaces in $ (M, g)$.  
\end{enumerate}
Suppose $ \Sigma$ is isometric to a convex surface in the Schwarzschild manifold such that $\bar Ric (\nu,\nu) \le 0$.
Then 
\[ 
 \frac{1}{8 \pi } \int_\Sigma V (H_0 - H) \, d \sigma  \ge \sqrt{ \left(\frac{|\Sigma_h|}{16 \pi}\right) } - m.
\]
where $m$ is the mass of the reference Schwarzschild manifold.
\end{theorem}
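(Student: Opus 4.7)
The plan is to adapt the Shi--Tam/Bartnik quasi-spherical extension to a Schwarzschild background, glue this extension to $(M,g)$ across $\Sigma$, and then apply the Riemannian Penrose inequality to the resulting complete asymptotically flat manifold. Let $\Sigma_0\subset\M$ denote the image of the given isometric embedding, with outward unit normal $\nu$. Because $\Sigma_0$ is convex and $\bar Ric(\nu,\nu)\le 0$, the outward normal exponential map should be a diffeomorphism onto the exterior region, producing a smooth foliation $\{\Sigma_s\}_{s\ge 0}$ along which the Schwarzschild metric takes the form $\bar g = ds^2 + g_s$. Following Bartnik, Shi and Tam, I would replace $\bar g$ on the exterior by a metric of the form $\tilde g = u^2\, ds^2 + g_s$; imposing $R(\tilde g) = R(\bar g) = 0$ translates into a parabolic equation for $u$ along the foliation, with initial data $u|_{s=0} = H_0/H$ chosen so that $\Sigma_0$ has mean curvature $H$ in $\tilde g$.

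After proving long-time existence and decay $u\to 1$ at infinity, so that the exterior equipped with $\tilde g$ is asymptotically Schwarzschild with the same mass parameter $m$, the next task is to establish monotonicity of the $V$-weighted functional
\[
F(s) = \frac{1}{8\pi}\int_{\Sigma_s} V\bigl(H_0(s)-H(s)\bigr)\, d\sigma_s,
\]
where $H_0(s)$ and $H(s)$ denote the mean curvatures of $\Sigma_s$ in $\bar g$ and $\tilde g$, respectively. Differentiating $F$ and combining the parabolic equation for $u$, the static equation $\bar\Delta V\, \bar g - \bar\nabla^2 V + V\bar Ric \ge 0$, integration by parts on $\Sigma_s$, and the sign hypothesis $\bar Ric(\nu,\nu)\le 0$, one should obtain $F'(s)\le 0$. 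A careful asymptotic analysis should further give $\lim_{s\to\infty} F(s) = m_{ADM}(\tilde g) - m$, the subtraction of $m$ reflecting that the reference is Schwarzschild rather than flat.

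Gluing $(M,g)$ to this extension along $\Sigma\cong\Sigma_0$ produces a complete manifold $\hat M$ with a Lipschitz metric across the seam. Since the induced metrics agree and the two mean curvatures both equal $H$, Miao's distributional result ensures $\hat M$ has non-negative scalar curvature in the weak sense, with the interior horizon $\Sigma_h$ still present. The outermost minimal surface of $\hat M$ must enclose $\Sigma_h$ by hypothesis (ii), so its area is at least $|\Sigma_h|$; applying the Huisken--Ilmanen Penrose inequality to $\hat M$ and combining with the monotonicity of $F$ yields
\[
\frac{1}{8\pi}\int_\Sigma V(H_0-H)\,d\sigma = F(0)\ge F(\infty) = m_{ADM}(\hat M) - m \ge \sqrt{\tfrac{|\Sigma_h|}{16\pi}} - m,
\]
which is the desired inequality. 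The hard part will be establishing the monotonicity of $F$: one must massage $F'(s)$ into a term of definite sign, with the $V$-weighted static equation together with $\bar Ric(\nu,\nu)\le 0$ playing the role that positivity of the Gauss curvature of $\Sigma_s$ plays in the Euclidean-reference Shi--Tam argument. Long-time existence of the parabolic PDE in the Schwarzschild background and the precise asymptotic identification of $F(\infty)$ with $m_{ADM}(\hat M)-m$ form the remaining technical obstacles.
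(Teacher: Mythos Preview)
Your overall architecture---Bartnik-type quasi-spherical extension on the exterior, $V$-weighted monotonicity, gluing to $(M,g)$, and then Miao's Penrose inequality with corners---is exactly the skeleton of the Lu--Miao argument as summarized in the paper. The identification $\lim_{s\to\infty}F(s)=m_{ADM}(\tilde g)-m$ and the final chain of inequalities are also correct (though your phrase ``asymptotically Schwarzschild with the same mass parameter $m$'' is misleading: $\tilde g$ is merely asymptotically flat with its own ADM mass, generally distinct from $m$).

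The genuine gap is in the construction of the foliation. You take the unit normal flow from $\Sigma_0$ and assert that, since $\Sigma_0$ is convex with $\bar Ric(\nu,\nu)\le 0$, the flow produces a smooth convex foliation on which the same sign conditions persist. This is precisely the step that fails without further work: convexity is \emph{not} automatically preserved under the unit normal flow in Schwarzschild, and both the monotonicity formula (which, for Schwarzschild with $T=0$, needs $\det(A_0)>0$ and $\partial V/\partial\nu>0$ on every leaf) and the long-time existence of the parabolic equation for $u$ (which needs $\det(A_0)-\bar Ric(\nu,\nu)>0$ on every leaf) require these conditions for all $s\ge 0$, not just at $s=0$. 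Lu and Miao do \emph{not} use the unit normal flow; they employ an inverse curvature flow, chosen specifically so that the evolving surfaces remain convex and continue to satisfy $\bar Ric(\nu,\nu)\le 0$. The present paper's own unit-normal-flow analysis in the Schwarzschild case (Section~5.1) confirms this difficulty: it succeeds only under additional quantitative hypotheses on $\Sigma_0$ (a lower bound on the principal curvatures of the form $\kappa_a>\sqrt{3}\,m/\rho^2$ together with $\rho>3m$), which are strictly stronger than the Lu--Miao hypotheses you are trying to work from. So either you must switch to an inverse curvature flow and prove that it preserves the needed sign conditions, or you must strengthen the hypotheses on $\Sigma_0$; the bare assumptions ``convex and $\bar Ric(\nu,\nu)\le 0$'' are not known to suffice for the unit normal flow.
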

In \cite{Lu-Miao}, it is also proved that the equality of the quasi-local Penrose inequality implies that the two mean curvatures are equal. See also \cite{Chen-Zhang,Lu-Miao1,Shi-Wang-Yu} on the equality case of this quasi-local Penrose inequality. 

In this article, our goal is to prove a quasi-local Penrose inequality for the quasi-local energy with an isometric embedding into the static slice of a spherically symmetric static spacetime satisfying the dominant energy condition (we will refer to such a manifold as a spherically symmetric static manifold). One such example is the Reissner-Nordstrom manifold, which is the 3-manifold with metric
\[
\bar g = \frac{d r^2}{1 - \frac{2m}{r} + \frac{e^2}{r^2}} + r^2 dS^2
\]
and the static potential 
\[
V = \sqrt{1 - \frac{2m}{r} + \frac{e^2}{r^2}}.
\]
It is the static slice in the Reissner-Nordstrom spacetime with metric 
\[
-(1 - \frac{2m}{r} + \frac{e^2}{r^2}) dt^2 + \frac{d r^2}{1 - \frac{2m}{r} + \frac{e^2}{r^2}} + r^2 dS^2.
\]

Here, we state the main theorem of this article when the surface is isometrically embedded into the Reissner-Nordstrom manifold
\begin{theorem}  \label{thm_main}
Let  $(M, g)$ be a compact, connected,  orientable, 3-manifold  with nonnegative scalar curvature, with boundary $\partial M$. 
Suppose the boundary is the union of  $ \Sigma $ and $ \Sigma_h$, where 
\begin{enumerate}
\item[(i)]   $\Sigma$ has positive mean curvature $H > 0$; and  
\item[(ii)]  $\Sigma_h$,  if nonempty, is a minimal surface and there are no other closed  minimal surfaces in $ (M, g)$.  
\end{enumerate}
There exists constants $C_6$ and $C_7$ depending only on the mass $m$ and charge $e$ of the reference Reissner-Nordstrom manifold
such that if $ \Sigma$ is isometric to a convex surface in the Reissner-Nordstrom manifold with principal curvature $\kappa_a$  and
\[
\begin{split}
 -  \bar Ric (\nu,\nu) > & 0\\
\min \kappa_a  > &  \frac{C_6}{r^2}\\
r  > &  C_7,
\end{split}
\]
then 
\[ 
 \frac{1}{8 \pi } \int_\Sigma V (H_0 - H) \, d \sigma  \ge \sqrt{ \left(\frac{|\Sigma_h|}{16 \pi}\right) } - m.
\]
\end{theorem}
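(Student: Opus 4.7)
The strategy is to adapt the collar-extension argument used by Lu and Miao for the Schwarzschild reference to the Reissner-Nordstr\"om setting. Starting from the isometric embedding of $\Sigma$ as a convex surface in the Reissner-Nordstr\"om manifold, I would build a Bartnik-type quasi-spherical extension of $(M,g)$ across $\Sigma$, asymptotic at infinity to the Reissner-Nordstr\"om exterior, in such a way that the glued manifold $\tilde M$ is asymptotically flat, has non-negative scalar curvature (with the matching across $\Sigma$ giving a non-negative distributional contribution by the choice of initial lapse), and has $\Sigma_h$ as its only outermost minimal surface. Applying the Huisken-Ilmanen/Bray Riemannian Penrose inequality to $\tilde M$ yields $m_{ADM}(\tilde M) \ge \sqrt{|\Sigma_h|/(16\pi)}$, and the theorem will reduce to an inequality relating $\frac{1}{8\pi}\int_\Sigma V(H_0-H)$ to $m_{ADM}(\tilde M) - m$.

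Concretely, I would foliate the exterior of the image of $\Sigma$ by an equidistant family $\{\Sigma_s\}_{s \ge 0}$ inside the Reissner-Nordstr\"om manifold, write the collar metric in the Bartnik form $u^2\, ds^2 + g_{\Sigma_s}$, where $g_{\Sigma_s}$ is the metric induced by the Reissner-Nordstr\"om background, and take $u$ to satisfy a parabolic Shi-Tam-type equation
\[
H_0^{\bar g}\,\partial_s u \,=\, u^2\,\Delta_{\Sigma_s} u \,+\, \text{(curvature terms involving } \bar Ric \text{ and } \kappa_a\text{)},
\]
with the initial value $u|_{\Sigma_0} = H_0/H$ chosen so that the outer mean curvature of $\Sigma$ from the collar side equals the inner mean curvature $H$. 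The equation is calibrated so that the collar has scalar curvature equal to the Reissner-Nordstr\"om scalar curvature $2e^2/r^4 \ge 0$, which ensures $\tilde M$ has non-negative scalar curvature in a distributional sense. I would then introduce a static-potential-weighted mass along the foliation,
\[
m_V(s) \,=\, \frac{1}{8\pi}\int_{\Sigma_s} V\bigl(H_0^{\bar g}(\Sigma_s) - H_u(\Sigma_s)\bigr)\, d\sigma \,-\, \text{(electromagnetic energy out to }\Sigma_s\text{)},
\]
and verify that (i) $m_V$ is non-increasing in $s$, (ii) $m_V(0) = \frac{1}{8\pi}\int_\Sigma V(H_0-H)$, and (iii) $\lim_{s\to\infty} m_V(s) = m_{ADM}(\tilde M) - m$, so that chaining these with the Penrose bound produces the asserted inequality.

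The principal obstacle is that the Reissner-Nordstr\"om reference is \emph{not} vacuum: its scalar curvature $2e^2/r^4$ enters every step. The parabolic equation for $u$ inherits this positive forcing term, so the Shi-Tam existence-and-monotonicity argument (and its Schwarzschild generalization used in Lu-Miao) must be redone with an additional source, which requires a priori $C^0$ and gradient bounds on $u$ that dominate the charge contribution. In parallel, computing $\partial_s m_V$ produces terms proportional to $e^2/r^4$ together with the curvature quantities $\bar Ric(\nu,\nu)$ and the principal curvatures of $\Sigma_s$, and absorbing these so that $\partial_s m_V \le 0$ is precisely what forces the hypotheses $-\bar Ric(\nu,\nu) > 0$, $\min \kappa_a > C_6/r^2$, and $r > C_7$. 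The constants $C_6$ and $C_7$ are the quantitative thresholds at which the convexity and radial largeness of $\Sigma$ dominate the fixed charge error, and establishing these uniform bounds, together with the correct definition of the charge-corrected mass so that only the Reissner-Nordstr\"om mass parameter $m$ survives at infinity, will be the technical core of the proof.
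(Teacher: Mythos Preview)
Your overall architecture---build a Bartnik-type collar outside the embedded $\Sigma_0$, prove a monotonicity for a weighted Brown--York quantity, and feed the limit into a Penrose inequality with corners---is exactly what the paper does. The genuine gap is in step two: you propose to calibrate the collar so that $\tilde R$ equals the Reissner--Nordstr\"om scalar curvature $\bar R=2e^2/r^4$, and then compensate by subtracting an ``electromagnetic energy'' term from $m_V(s)$. But if you run the variation of $\int_{\Sigma_s}V(H_0-\tilde H)\,d\sigma$ with $\tilde R=\bar R$, the static identity $\Delta_{\Sigma}V+V\,\bar{Ric}(\nu,\nu)=-H_0\,\partial_\nu V+TV$ (where $T$ is defined by $\bar\Delta V-\bar D^2V(\nu,\nu)+V\,\bar{Ric}(\nu,\nu)=TV$) leaves behind a term $\int_{\Sigma_s} VTf\,(u-1)\,d\sigma$ of indefinite sign, and there is no natural ``electrostatic energy'' $E(s)$ whose $s$-derivative equals this integrand, since it depends on $u$ itself. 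So the monotonicity does not close with your prescription.

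The paper's key observation, which replaces your undefined correction, is to prescribe instead
\[
\tilde R \;=\; \bar R \;+\;\Bigl(\tfrac{1}{u^2}-1\Bigr)\,T.
\]
This choice does three things at once: (i) it still gives $\tilde R\ge 0$ because the dominant energy condition forces $\bar R\ge T\ge 0$; (ii) the extra $\tfrac{T}{2}(\tfrac{1}{u}-u)$ produced in $\partial_s(H_0-\tilde H)$ combines with the offending $TV(u-1)$ term to yield $-\tfrac{T}{2}\,V\,\tfrac{(u-1)^2}{u}$, so the plain quantity $\int_{\Sigma_s}V(H_0-\tilde H)\,d\sigma$ is monotone with no correction needed, under the foliation conditions $\partial_\nu V>0$ and $\det A_0>\tfrac{T}{2}$; and (iii) the parabolic equation for $u$ has zeroth-order coefficient $\det A_0+\tfrac{T}{2}-\bar{Ric}(\nu,\nu)$, whose positivity drives $u\to 1$ and gives asymptotic flatness. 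The constants $C_6,C_7$ then enter not as error-absorption thresholds in a single inequality, but as conditions on $\Sigma_0$ guaranteeing that these three foliation inequalities \emph{propagate} under the unit normal flow; the paper checks this by passing to the conformally flat model and tracking the Euclidean support function and principal curvatures.
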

In Section 5, we will also prove a similar result for isometric embeddings into any asymptotically flat and spherically symmetric static manifold.

One of the key steps in the positive mass theorem for the Brown-York mass by Shi and Tam and the quasi-local Penrose inequality by  Lu and Miao is to solve the prescribed scalar curvature equation 
\begin{equation}\label{vacuum_constraint}
R = 0
\end{equation}
on the exterior region of the isometric embedding using Bartnik's quasi-spherical metric.  In both cases, an appropriate quasi-local mass is monotonic on the solution of \eqref{vacuum_constraint}. It is crucial for us to find an analogous equation for the static manifold.

The structure of the paper is as follows: in Section 2, we review the proof of the positivity of Brown-York mass by Shi and Tam and the quasi-local Penrose inequality by Lu and Miao and give an outline of our proof.  In Section 3, we introduce the prescribed scalar curvature equation  \eqref{prescribed} using Bartnik's quasi-spherical metric \eqref{perturbed metric} and derive a monotonicity formula (see Corollary \ref{cor-1}) for the quasi-local mass on the solution of the prescribed scalar curvature equation. In Section 4, we study the prescribed scalar curvature equation in more details and prove a sufficient condition about the foliation (see \eqref{condition_foliation}) so that the solution to \eqref{prescribed}  is a smooth and asymptotically flat manifold with non-negative scalar curvature. In particular, this reduces the quasi-local Penrose inequality to show that the unit normal flow creates a foliation satisfying the condition given in \eqref{condition_foliation}. Up to Section 4, the result holds for any static manifold. In Section 5, we find a sufficient condition on the initial surface $\Sigma_0 $ so that the unit normal flow resulted in a foliation satisfying \eqref{condition_foliation} in a spherically symmetric static manifold.

\section{Review of the proofs of Shi-Tam and Lu-Miao}
Let us first recall the proof of the positivity for the Brown-York mass by Shi and Tam in \cite{Shi-Tam} and the proof of the quasi-local Penrose inequality by Lu and Miao \cite{Lu-Miao}. This helps us to illustrate the main strategy of our proof since we adopt several ideas and methods from the two articles. 

Suppose $(M,g)$ is a 3-manifold with a non-negative scalar curvature such that $\Sigma$, the boundary of $M$, is a topologically $S^2$ with positive Gauss curvature and positive mean curvature. 
Let $\Sigma_0$ be the image of the isometric embedding of $\Sigma$ into $\R^3$.
Since the $ \Sigma_0 $ is  convex in $ \R^3$, we write the Euclidean metric on  the exterior of $ \Sigma $, $M_{ext}$,  as  
\[ \delta = d s^2 + \sigma_s, \] 
where $\sigma_s $ is  the induced metric  on the  hypersurface $\Sigma_s$  with a fixed Euclidean distance $s$  to $ \Sigma$. Consider a new metric on $M_{ext}$ of the form
\[ g_u = u^2 d s^2 +\sigma_s \]
where $u$ will be determined by the prescribed scalar curvature equation
\[
R(g_u)= 0.
\]
This is a parabolic equation on $u$ and by the maximum principle, for any initial value of $u$ on $ \Sigma_0$, $g_u$ will be asymptotically flat. In particular, we may choose $u$ such that on $\Sigma_0$, 
\[   u =  \frac{H}{H_0}.\]

Let $H_0$ and $H_u$ denote the mean curvature of $\Sigma_s$ with respect to $\delta$ and $g_u$. The prescribed scalar curvature equation implies that 
\[ 
\frac{1}{ 8 \pi }  \int_{\Sigma_s} (H_0 - H_u ) \, d \sigma_s 
 \]
is nonincreasing  in $s$.  Moreover,
\[
\lim_{s \to \infty} \frac{1}{ 8 \pi }  \int_{\Sigma_s} (H_0 - H_u ) \, d \sigma_s  = m_{ADM} (g_u).
\]
It follows that 
\[
\frac{1}{8 \pi} \int_{\Sigma} H_0 - H  \ge  m_{ADM} (g_u).
\]

By gluing $(M,g) $ and $(M_{ext}, g_u)$ along their common boundary $\Sigma$ 
and applying  the positive mass theorem (which is valid under the assumption that $H_u=H$ along $\Sigma$), it follows that 
\[
\frac{1}{8 \pi }  \int_\Sigma (H_0 - H) \, d \sigma   \ge 0.
\]

To summarize, the following two key features of the prescribed scalar curvature equation 
\[
R(g_u) = 0
\]
via Bartnik's quasi-spherical metric played fundamental roles in the proof of the positive mass theorem:
\begin{enumerate}
\item The prescribed scalar curvature equation gives rises to an asymptotically flat manifold with non-negative scalar curvature.
\item The monotonicity formula for the quasi-local mass on the solution of the prescribed scalar curvature equation.
\end{enumerate}

In \cite{Lu-Miao}, Lu and Miao proved a quasi-local Penrose inequality using isometric embedding into the Schwarzschild manifold as the reference. New difficulties arise for their proof. For example, the static potential affects both the monotonicity formula and the prescribed scalar curvature equation. Also, they have to use the Riemannian Penrose inequality for gluing satisfying the condition $H_u=H$. 

To overcome these difficulties, they used an inverse curvature flow to construct a convex foliation with
\[
\bar Ric (\nu,\nu) \le 0.
\]
Assuming the above condition, they show that the solution to the prescribed scalar curvature equation 
\[
R(g_u) = 0
\]
is a smooth asymptotically flat manifold on which the quasi-local mass is non-increasing.

By the gluing construction and the monotonicity formula, the quasi-local Penrose inequality now follows from the Riemannian Penrose inequality for an asymptotically flat manifold with corners. This particular version of the Riemannian Penrose inequality is proved in \cite{Miao} using a density theorem. 

In our case, we consider the quasi-local mass associated to an isometric embedding into a static manifold. Given any foliation of the static manifold, we consider the following function $T$ defined by  
\[
(\bar{\Delta}V)-\bar{D}^2 V(\nu,\nu)+V\bar{Ric}(\nu,\nu)=TV.
\]

Our key observation is that if the unit normal flow of $\Sigma$ in the static manifold gives rise to a convex foliation satisfying 
\begin{equation} \label{condition_foliation}
\frac{\partial V}{\partial \nu}, \ det (A_0)+ \frac{T}{2} -  \bar Ric(\nu,\nu) > 0   \  \mathrm{and} \  det (A_0) >  \frac{T}{2},
\end{equation}
where $A_0$ is the second fundamental form of the surfaces, then the following prescribed scalar curvature equation
\[
\tilde R =  \bar R + (\frac{1}{u^2}-1)T
\] 
will satisfy both the key properties above as in the proof of Shi and Tam for the Brown-York mass. Then we apply the Penrose inequality for asymptotically flat manifolds with corners established in \cite{Miao} to prove a quasi-local Penrose inequality for the quasi-local energy with reference in the static manifold. 

To find a condition on $\Sigma_0$ such that the unit normal flow would satisfy \eqref{condition_foliation}, we utilize that any spherically symmetric manifold is conformally flat. The conformal flatness allows us to study a corresponding flow of surfaces in $\R^3$ and use the evolution formulae for the principal curvature and the support function in  $\R^3$ as well as the formula for the principal curvature under a conformal change.

\section{Monotonicity formula in a static background}  \label{sec-mono-static}
In this section, we consider the variation of the quasi-local energy with reference in a static manifold. Namely, we have a triple $(\M, \bar g,V)$ such that 
\[
\bar \Delta V  \bar g  - \bar \nabla^2 V +  V \bar Ric \ge 0.
\]
where $\bar \nabla$, $\bar \Delta$ and $\bar Ric$ are the Levi-Civita connection, Laplacian and Ricci curvature of the metric $\bar g$, respectively.

Consider a family of embedded hypersurfaces $\{\Sigma_s \}$  evolving in $(\M, \bar g)$ according to 
\begin{align}\label{formula for flow}
\frac{\partial X}{\partial s}=f\nu , 
\end{align}
where $X$ denotes points in $\Sigma_s$,  $f >0$ denotes  the speed  of the flow, and $\nu$ is a  unit normal to $\Sigma_s$. Denote the region swept by $\{ \Sigma_s \}$ as $U$ and we consider the following function $T$ on $U$ defined by 
\[
(\bar{\Delta}V)-\bar{D}^2 V(\nu,\nu)+V\bar{Ric}(\nu,\nu)=TV.
\]

The following lemma is a simple consequence of the dominant energy condition. 
\begin{lemma} \label{positive_scalar}
The dominant energy condition implies that the scalar curvature $\bar R$ of $\bar g$ and $T$ satisfies
\[
\bar R \ge T \ge 0.
\]
\end{lemma}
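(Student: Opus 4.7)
The plan is to interpret both $\bar R$ and the function $T$ in terms of the matter content of the ambient static spacetime, and then to invoke the dominant energy condition. Since $(\M, \bar g, V)$ is a static slice, the spacetime metric is $\hat g = -V^2 dt^2 + \bar g$. In the orthonormal frame with $e_0 = V^{-1} \partial_t$ and $\{e_i\}$ orthonormal for $\bar g$, standard formulas give the spacetime Ricci components $\mathrm{Ric}_{\hat g}(e_0, e_0) = V^{-1} \bar \Delta V$, $\mathrm{Ric}_{\hat g}(e_i, e_j) = \bar R_{ij} - V^{-1} \bar \nabla_i \bar \nabla_j V$, and $\mathrm{Ric}_{\hat g}(e_0, e_i) = 0$.

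First, I would use the Einstein equation to rewrite these in terms of the stress-energy. Let $\mu$ denote the energy density measured by $e_0$ and $S$ the spatial stress. The vanishing of the time--space cross components of $\mathrm{Ric}_{\hat g}$ forces the momentum density to vanish, and, since the slice is time-symmetric, the Hamiltonian constraint reduces to $\bar R = 16\pi \mu$. Taking traces in the trace-reversed Einstein equation and combining with the formulas above, one obtains $\bar \Delta V = 4\pi V(\mu + \mathrm{tr}_{\bar g} S)$, together with the tensor identity
\[
(\bar \Delta V)\, \bar g - \bar \nabla^2 V + V \bar{Ric} = 8\pi V(\mu \bar g + S).
\]

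Next, contracting this identity with $\nu \otimes \nu$ gives $TV = 8\pi V(\mu + S(\nu, \nu))$, so that $T = 8\pi(\mu + S(\nu, \nu))$ and $\bar R - T = 8\pi (\mu - S(\nu, \nu))$. Both claimed inequalities are then immediate consequences of the dominant energy condition: applied to the pairs $(e_0 + \nu,\, e_0 + \nu)$ and $(e_0 + \nu,\, e_0 - \nu)$ of future causal vectors (and using the vanishing of the momentum density) DEC yields $\mu + S(\nu, \nu) \ge 0$ and $\mu - S(\nu, \nu) \ge 0$ respectively, which translate to $T \ge 0$ and $\bar R \ge T$.

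The argument is essentially bookkeeping with the constraint equations; no single step should present a serious obstacle. The only mild subtlety is the need to use the sharper two-vector form of DEC (to control $\mu \pm S(\nu, \nu)$) rather than merely the scalar statement $\mu \ge 0$, but this is a standard reformulation obtained by testing against a pair of future-null vectors.
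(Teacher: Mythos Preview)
Your proof is correct and follows essentially the same route as the paper: both arguments identify $T$ and $\bar R - T$ with contractions of the spacetime Einstein (equivalently, stress--energy) tensor and then apply the dominant energy condition to the future null pair $e_0 \pm \nu$. The only difference is cosmetic: you derive the identity $(\bar\Delta V)\bar g - \bar\nabla^2 V + V\,\bar{Ric} = 8\pi V(\mu\,\bar g + S)$ explicitly from the static Einstein equations, whereas the paper quotes the equivalent formulas $\bar R = 2G(e_0,e_0)$ and $T = G(e_0,e_0) \pm G(\nu,\nu)$ (citing Beig) and then evaluates $G$ on $e_0 \pm \nu$ directly.
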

\begin{proof}
Consider the Einstein tensor
\[ G_{\alpha \beta} =  R_{\alpha \beta} - \frac{R}{2} g_{\alpha \beta} .  \]
Recall that (see for example \cite{Beig})
\[
\begin{split}
\bar R = & 2 G(e_0,e_0) \\
T =  &   G(e_0,e_0)  - G(\nu,\nu)
\end{split}
\]
Thus, from the dominant energy condition, we conclude that 
\[
\begin{split}
T =  &   G(e_0+ \nu ,e_0 - \nu ) \ge 0  \\
\bar R  - T = & \frac{1}{2} \left (  G(e_0 + \nu,e_0+ \nu)  +  G(e_0 - \nu,e_0-  \nu)  \right ) \ge 0.
\end{split}
\]
\end{proof}

The metric  $\bar g $ over the region $U$ can be written as
\begin{align}\label{metric by foliation}
\bar{g}=f^2ds^2+ \sigma_s ,
\end{align}
where $\sigma_s$ is the induced metric of $\Sigma_s$.

To establish a monotonicity formula for the quasi-local energy with reference in $(\M, \bar g)$, we  consider  another metric 
\begin{align}\label{perturbed metric}
\tilde g=u^2 f^2dt^2+\sigma_s , 
\end{align}
where $u > 0 $ is a function on $U$. 
We impose the following condition on the scalar curvature $\tilde R$  of $\tilde g $:
\begin{equation}\label{prescribed}
\tilde R = \bar R  + (\frac{1}{u^2}-1) T. 
\end{equation}
\begin{proposition}  \label{prop-mono}
Under the above notations and assumptions, 
\begin{align*}
  \frac{d}{d s}\left( \int_{\Sigma_t} V ( H_0 - \tilde H ) d \sigma_s \right)= -  \int_{\Sigma_s} 
\frac{1}{u}( u - 1 )^2 f ^2  H_0  \frac{\partial V}{\partial \nu} d \sigma_s  - \int_{\Sigma_s}   
    \frac{V}{u}( u - 1 )^2 f ^2 (det(A_0)- \frac{T}{2})   d \sigma_s , 
\end{align*}
where $H_0$ and $\tilde H$ are the mean curvature of $\Sigma_s$ with respect to $\bar{g}$ 
and $\tilde g$, respectively, and  $A_0$  is the second fundamental form of $\Sigma_s$ with respect to  
$\bar{g}$.
\end{proposition}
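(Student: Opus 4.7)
The plan is to differentiate $\int_{\Sigma_s} V(H_0 - \tilde H)\,d\sigma_s$ directly, using (i) the evolution equations for $V$, $H_0$, $\tilde H$, and $d\sigma_s$ under the flow $\frac{\partial X}{\partial s} = f\nu$, (ii) the prescribed scalar curvature equation \eqref{prescribed} to control the dependence on $u$, and (iii) the static equation rewritten tangentially as $H_0\frac{\partial V}{\partial \nu} + \Delta^\sigma V + V\bar{Ric}(\nu,\nu) = TV$, which follows from the defining equation for $T$ together with the standard identity $\bar\Delta V - \bar D^2 V(\nu,\nu) = \Delta^\sigma V + H_0\frac{\partial V}{\partial \nu}$ on the hypersurface.

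First I would exploit the fact that the induced metric $\sigma_s$ on $\Sigma_s$ is common to $\bar g$ and $\tilde g$. This immediately gives $\tilde A = A_0/u$, hence $\tilde H = H_0/u$ and $|\tilde A|^2 = |A_0|^2/u^2$. The Gauss equation $R^\sigma = \bar R - 2\bar{Ric}(\nu,\nu) + 2\det(A_0)$, applied in both $\bar g$ and $\tilde g$ and combined with \eqref{prescribed}, yields the algebraic identity
\[
\tilde{Ric}(\tilde\nu,\tilde\nu) - \bar{Ric}(\nu,\nu) = \left(1 - \tfrac{1}{u^2}\right)\left(\tfrac{T}{2} + \det(A_0)\right),
\]
which is the convenient reformulation of the prescribed equation. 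I would then record the evolution formulas $\partial_s V = f\frac{\partial V}{\partial \nu}$ and $\partial_s(d\sigma_s) = fH_0\,d\sigma_s$, together with the Riccati-type formula $\partial_s H_0 = -\Delta^\sigma f - f\bigl(|A_0|^2 + \bar{Ric}(\nu,\nu)\bigr)$ and its analogue in $\tilde g$ with lapse $N = uf$. Comparing the two expressions for $\partial_s \tilde H$ (one obtained from $\tilde H = H_0/u$, one from the Riccati formula in $\tilde g$) determines $\partial_s u$ in terms of $\Delta^\sigma f$, $\Delta^\sigma(uf)$, and the ambient geometric data on $\Sigma_s$.

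Substituting these ingredients into $\frac{d}{ds}\int_{\Sigma_s} V(H_0 - \tilde H)\,d\sigma_s$ produces a sum of bulk terms plus Laplacian contributions weighted by $V$ and powers of $u$. I would integrate by parts on $\Sigma_s$ so that the Laplacians land on $V$, and then use the tangential static equation to eliminate $\Delta^\sigma V$ and $\bar{Ric}(\nu,\nu)$ in favour of $T$, $V$, and $\frac{\partial V}{\partial \nu}$. I expect the terms linear in $(u-1)$ to cancel identically, leaving exactly the two quadratic contributions displayed on the right-hand side of the proposition.

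The main obstacle is the bookkeeping needed to realize the linear-in-$(u-1)$ cancellation. The cancellation is structurally forced: at $u \equiv 1$ one has $\tilde g = \bar g$ and $\tilde H = H_0$, so the integrand vanishes and the derivative must vanish to first order in $u-1$. The choice of the inhomogeneous term $(1/u^2-1)T$ in \eqref{prescribed} is precisely what enables the static equation for $V$ to absorb those linear terms, leaving only the quadratic $(u-1)^2$ expression; the sign of the resulting integrand is then controlled by $\frac{\partial V}{\partial \nu}$ and $\det(A_0) - T/2$, in agreement with the foliation condition \eqref{condition_foliation}.
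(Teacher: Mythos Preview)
Your approach is essentially identical to the paper's: both use the Riccati formulas for $\partial_s H_0$ and $\partial_s \tilde H$ (with lapses $f$ and $uf$), combine the Gauss equation with \eqref{prescribed} to express the difference of normal Ricci curvatures, then integrate by parts and invoke the tangential static identity $\Delta^\sigma V + V\bar{Ric}(\nu,\nu) = -H_0\,\partial_\nu V + TV$ to collapse everything into the displayed $(u-1)^2$ form. Two minor remarks: your Ricci identity has a sign flipped---it should read $\tilde{Ric}(\tilde\nu,\tilde\nu)-\bar{Ric}(\nu,\nu)=\bigl(\tfrac{1}{u^2}-1\bigr)\bigl(\det(A_0)+\tfrac{T}{2}\bigr)$---and the detour through $\partial_s u$ is unnecessary, since the paper simply subtracts the two Riccati formulas directly.
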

\begin{proof}
Let $\tilde A$ be the second fundamental form of $\Sigma_s$ with respect to $\tilde g$. Recall that 
\begin{align}\label{formula H,A}
\tilde H=\frac{H_0}{u}, \quad \tilde A=\frac{A_0}{u}.
\end{align}
By the second variation formula, 
\begin{align}\label{H}
\frac{\partial}{\partial s} H_0 =  - \Delta f - f( | A_0|^2 + \bar{Ric}(\nu, \nu)) 
\end{align}
\begin{align}\label{H_u}
\frac{\partial}{\partial s}  \tilde H=  - \Delta (fu) - fu ( | \tilde A |^2 + Ric_{\tilde g} (\tilde \nu, \tilde \nu)) ,
\end{align}
where $\Delta$ is the Laplacian on $(\Sigma_s, \sigma_s)$, $Ric_{\tilde g}$ is the Ricci curvature of  $ \tilde g$ and $\tilde \nu$ is the unit normal of $\Sigma_s$ with respect to $\tilde g$ .

Let $det(A_0)$ and $ det(\tilde A) $ be the second elementary symmetric functions of the principal curvatures of $\Sigma_s$ in $(\M, \bar g)$ and $(\M, \tilde g)$, respectively. 
By the Gauss equation,
\begin{align}\label{Gauss-1}
det(A_0)=K -\frac{\bar{R}}{2}+\bar{Ric}(\nu,\nu),\quad det(\tilde A)=K- \frac{\tilde R}{2}+Ric_{\tilde g}(\tilde \nu,\tilde \nu) .
\end{align}
We have
\begin{equation}\label{Ric-1}
\begin{split}
Ric_{\tilde g}(\tilde \nu,\tilde \nu) = & \bar{Ric}(\nu,\nu)+det(\tilde A)-det(A_0)+  \frac{\tilde R - \bar R }{2}\\
= & \bar{Ric}(\nu,\nu)+det(A_0)(\frac{1}{u^2}- 1) +   (\frac{1}{u^2}-1) \frac{T}{2}. 
\end{split}
\end{equation}
Putting  (\ref{H}), (\ref{H_u}) and (\ref{Ric-1}) together, we have
\begin{align*}
   & \frac{\partial}{\partial s} ( H_0  - \tilde H) \\
= & \Delta ( u f- f ) - f( | A_0 |^2 + \bar{Ric} (\nu, \nu)) + f u  ( | \tilde A |^2 + Ric_{\tilde g} (\nu, \nu)) \\
=& \Delta ( f(u-1) )+\bar{Ric} (\nu, \nu) f(u-1) + | A_0 |^2 f (\frac{1}{u}-1)+ f (\frac{1}{u} - u) (det(A_0) + \frac{T}{2}).
\end{align*}
Using the formula $\frac{\partial}{\partial s}d \sigma=fH_0d \sigma$ and integrating  by part, we have
\begin{align*}
 & \frac{d}{ds}\left(\int_{\Sigma_t} V( H_0  - \tilde H ) d \sigma_s\right) \\
=&\int_{\Sigma_s}f\frac{\partial V}{\partial \nu} H_0 
(1- \frac{1}{u})d \sigma_s+\int_{\Sigma_s} V H_0 ^2 (1- \frac{1}{u}) f d \sigma_s + \int_{\Sigma_s}  V f  (\frac{1}{u}-u) \frac{T}{2}  d \sigma_s \\
&+\int_{\Sigma_s}f(u-1) \left(\Delta V  +V \bar{Ric}(\nu, \nu)\right) d \sigma_s +\int_{\Sigma_s}\left( V|A_0|^2 f (\frac{1}{u}-1)+V det(A_0) f (\frac{1}{u}- u)\right) d \sigma_s \\
=&\int_{\Sigma_s}f(u-1)\left(\Delta V + V \bar{Ric} (\nu, \nu)+ \frac{1}{u}H_0 \frac{\partial V}{\partial \nu}\right) d \sigma_s +  \int_{\Sigma_s}  V f  (\frac{1}{u}-u) \frac{T}{2}  d \sigma_s \\
&-\int_{\Sigma_s}V det(A_0) \frac{(u-1)^2}{u}d \sigma_s .
\end{align*}
Moreover, 
\begin{align*}
\Delta V+V\bar{Ric}(\nu, \nu)= \bar{\Delta}V-\bar{D}^2V(\nu,\nu)- H_0 \frac{\partial V}{\partial \nu}+ V \bar{Ric}(\nu, \nu)
= -H_0 \frac{\partial V}{\partial \nu}  + TV.
\end{align*}
Therefore, we conclude 
\begin{align*}
  &  \frac{d}{ds}\left(\int_{\Sigma_s} V( H_0  - \tilde H ) d \sigma_s \right) \\
=&\int_{\Sigma_s}f (u-1)(-1+ \frac{1}{u}) H_0 \frac{\partial V}{\partial \nu} d \sigma_s - \int_{\Sigma_s}V det(A_0) \frac{(u-1)^2}{u} fd \sigma_s  \\
   &+  \int_{\Sigma_s}  V Tf  \left ( \frac{1}{2}(\frac{1}{u}-u)  +(u-1) \right) d \sigma_s \\
=&-\int_{\Sigma_s}\frac{(u-1)^2}{u} f H_0 \frac{\partial V}{\partial \nu} d \sigma_s -\int_{\Sigma_s}V (det(A_0) - \frac{T}{2}) \frac{(u-1)^2}{u} f d \sigma_s .
\end{align*}

\end{proof}

\begin{corollary}\label{cor-1}
Suppose $(\M, \bar g)$ is foliated by $ \{ \Sigma_s \}$ such that 
\begin{equation} \label{eq-monotone}
 \frac{\partial V}{\partial \nu} >  0, \  H_0  > 0 \ \mathrm{and} \ det(A_0) >  \frac{T}{2} .
\end{equation}
Then  $ \displaystyle \int_{\Sigma_s} V ( H_0 - \tilde H) d \sigma_s $ is nonincreasing in $s$
and it is a constant if and only if $ u=1 $.
\end{corollary}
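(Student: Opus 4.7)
The plan is to derive Corollary \ref{cor-1} as an essentially immediate consequence of the exact formula proved in Proposition \ref{prop-mono}. First I would quote that formula verbatim, noting that the derivative $\frac{d}{ds}\int_{\Sigma_s} V(H_0 - \tilde H)\, d\sigma_s$ is a sum of two integrals, each carrying an overall minus sign and each containing the manifestly non-negative factor $(u-1)^2/u$ (recall $u>0$ by construction of the metric in \eqref{perturbed metric}).

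Next I would check the sign of the remaining factors under the three hypotheses of the corollary. The first integrand is proportional to $f H_0\,(\partial V/\partial \nu)/u$, which is strictly positive: $f>0$ is the flow speed, $H_0>0$ and $\partial V/\partial \nu>0$ are assumed, and $u>0$. The second integrand is proportional to $V f(\det(A_0) - T/2)/u$, which is again strictly positive: the gap $\det(A_0) - T/2 > 0$ is hypothesized, while $V>0$ is the standing positivity of the static potential on the region being foliated. Both terms therefore contribute non-positively to the derivative, giving the monotonicity claim.

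For the rigidity half I would argue directly from the same formula. If the derivative vanishes on some subinterval of $s$-values, then since both integrands are pointwise non-negative and their coefficients in front of $(u-1)^2$ are strictly positive by the above sign analysis, one concludes $u\equiv 1$ on the corresponding region. Conversely, if $u\equiv 1$, then by \eqref{perturbed metric} the two metrics $\tilde g$ and $\bar g$ agree, so $\tilde H = H_0$ and the integral is identically zero, hence trivially constant.

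I do not anticipate any real obstacle here: the substance of the corollary is a transparent sign check once Proposition \ref{prop-mono} is in hand. The only mildly delicate point worth flagging is the implicit requirement $V>0$ on the foliated region, which I would either note as part of the static-manifold framework or justify explicitly from the geometric context (for instance, working outside a horizon of the static slice, as will be the case in the applications in Sections 4 and 5).
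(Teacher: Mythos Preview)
Your proposal is correct and matches the paper's intended argument: the corollary is stated without proof immediately after Proposition~\ref{prop-mono} precisely because it follows by the straightforward sign analysis you describe. Your remark that $V>0$ is an implicit standing assumption on the foliated region is appropriate and worth stating explicitly.
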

\begin{remark}
For a vacuum initial data set, $T \equiv 0$. In this special case, the results here are precisely the ones in \cite[Section 2]{Lu-Miao}. 
\end{remark}
\section{Foliation and the quasi-local Penrose inequality}
In this section, we study the prescribed scalar curvature equation \eqref{prescribed}. We now assume that $(\M, \bar g)$ is asymptotically flat and $\Sigma_s$
foliated $\M$ outside a compact set. See \cite[Definition 1.1]{Fan-Shi-Tam} for the precise assumption on the  asymptotically flatness. We will assume that the static potential approaches to $1$ at infinity with a comparable rate as well. Namely, for some $\tau > \frac{1}{2}$
\[
V= 1 + O(r^{-\tau}).
\]

We now assume that $f=1$. Namely, we assume that the metric $\bar g$ is 
\[
\bar g = ds^2 + \sigma_s
\]
\begin{proposition}  \label{prop-af}
Suppose $(\M, \bar g)$ is asymptotically flat and the unit normal foliation $\Sigma_s$ satisfies
\[
 det(A_0)  + \frac{T}{2} - \bar Ric(\nu,\nu) >  0
\]
then the solution to the  prescribed scalar curvature equation 
\[
\tilde R = \bar R  + (\frac{1}{u^2}-1) T. 
\]
for the metric
\[
\tilde g = u^2 ds^2  + \sigma_s
\] is a smooth and asymptotically flat manifold $\tilde g$ for any initial value $u> 0$ on $\Sigma_0$.
\end{proposition}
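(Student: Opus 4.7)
The plan is to follow the parabolic strategy of Shi--Tam, modified as in Lu--Miao to handle the static potential $V$ and the source $T$. The first step is to recast \eqref{prescribed} as a quasilinear parabolic equation for $u$ with $s$ playing the role of time. Combining $\tilde A = A_0/u$, the Gauss identity \eqref{Gauss-1}, and the second variation formula \eqref{H_u} applied to the flow $\partial_s X = \partial_s$ (whose normal speed in $\tilde g$ is $u$), with the corresponding identity \eqref{H} for the unit normal flow in $\bar g$, the prescribed scalar curvature condition \eqref{prescribed} is equivalent to
\[
H_0 \, \frac{\partial u}{\partial s} \;=\; u^2 \Delta_{\sigma_s} u \;+\; u(1-u^2)\left(\det(A_0) + \frac{T}{2} - \bar{Ric}(\nu,\nu)\right).
\]
The coefficient of the reaction term is positive by hypothesis, and $H_0 > 0$ on the foliation in the asymptotically flat end, so the equation is genuinely parabolic with the sign structure needed for a maximum principle argument.

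Short-time existence now follows from standard parabolic theory for any smooth, strictly positive initial value on $\Sigma_0$. Because $u\equiv 1$ is a stationary solution, the maximum principle forces $\sup_{\Sigma_s} u$ to be non-increasing whenever it exceeds $1$ and $\inf_{\Sigma_s} u$ to be non-decreasing whenever it is below $1$: indeed, at an interior spatial maximum with $u > 1$ one has $\Delta_{\sigma_s} u \le 0$ and $u(1-u^2) < 0$, so the right-hand side is non-positive, and similarly at a minimum with $u < 1$. This pinches $u$ between two positive constants determined by the initial data, which combined with interior Schauder estimates and continuation gives long-time existence and uniform smoothness for all $s \ge 0$.

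For the asymptotic flatness of $\tilde g$, I would exploit the asymptotic expansions on the unit normal foliation: since $(\M,\bar g)$ is asymptotically flat and $V = 1 + O(r^{-\tau})$, the leaves satisfy $H_0 = 2 r^{-1} + O(r^{-1-\tau})$, $\det(A_0) = r^{-2} + O(r^{-2-\tau})$, and $\bar{Ric}(\nu,\nu), T = O(r^{-2-\tau})$. Setting $u = 1 + w$ and linearizing, the equation for $w$ becomes a parabolic equation whose leading order zero-th order coefficient behaves like $-2/r^2$, to which the Shi--Tam/Lu--Miao template of explicit barriers $1 \pm C s^{-\tau}$ applies. Iterating the barrier argument produces $u - 1 = O(r^{-\tau})$, and differentiating the equation and applying parabolic Schauder theory yields $|\nabla^k (u - 1)| = O(r^{-\tau-k})$; these are precisely the decay rates needed for $\tilde g = u^2 \, ds^2 + \sigma_s$ to be asymptotically flat of the same order as $\bar g$. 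The main technical obstacle is this last step: the source term $T$ is not present in Shi--Tam, so one must verify that its contribution is dominated by the same class of barriers under the assumed decay $V = 1 + O(r^{-\tau})$; once this is established, the rest of the asymptotic analysis follows the existing scheme.
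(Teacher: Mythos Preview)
Your proposal is correct and follows essentially the same route as the paper: you derive the same parabolic equation
\[
H_0\,\partial_s u = u^2 \Delta_{\sigma_s} u + (u-u^3)\bigl(\det(A_0) - \bar{Ric}(\nu,\nu) + \tfrac{T}{2}\bigr),
\]
observe that the hypothesis makes the reaction coefficient positive, and invoke the maximum principle together with the Shi--Tam template for long-time existence and asymptotic flatness. The paper's own proof is terser---it writes down the same PDE, notes the sign, and refers to Section~2 of \cite{Shi-Tam} for the analytic details---so your version actually fleshes out the barrier/decay argument more than the paper does (the paper only records $u = 1 + O(s^{-1})$ in a subsequent remark, without spelling out the barrier construction).
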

\begin{proof}
Recall that the $\tilde R$ is given by
\[
H_0 \frac{\partial u}{\partial s} = u^2 \Delta u + (u - u^3) K - \frac{1}{2} u \bar R +  \frac{1}{2} u^3 \tilde R
\]
As a result, the  prescribed scalar curvature equation  is the same as
\[
H_0 \frac{\partial u}{\partial s} =  u^2 \Delta u + (u - u^3) K - \frac{1}{2} u \bar R +  \frac{1}{2} u^3 \bar R  + \frac{1}{2}(u-u^3) T
\]
Using the Gauss equation, we conclude that 
\[
H_0 \frac{\partial u}{\partial s} = u^2 \Delta u  + (u-u^3) (det(A_0)  - \bar Ric  (\nu,\nu)  + \frac{T}{2})
\]
The assumption implies that in the last term, $(u-u^3)$ is multiplied to a positive term. It follows that the solution $u$ exists for all $s$ and 
\[
\lim_{s \to \infty} u =1
\]
by the maximum principle. This finishes the proof of the proposition. See for example Section 2 of \cite{Shi-Tam} for more details.
\end{proof}
\begin{remark}
In particular, 
\[
u = 1 + O(s^{-1})
\]
which we will need later.
\end{remark}

As a result, we conclude the following
\begin{proposition}  \label{local_penrose_pre}
Let  $(M, g)$ be a compact, connected,  orientable, 3-manifold  with nonnegative scalar curvature, with boundary $\partial M$. 
Suppose the boundary is the union of  $ \Sigma $ and $ \Sigma_h$, where 
\begin{enumerate}
\item[(i)]   $\Sigma$ has positive mean curvature $H > 0$; and  
\item[(ii)]  $\Sigma_h$,  if nonempty, is a minimal surface and there are no other closed  minimal surfaces in $ (M, g)$.  
\end{enumerate}
Suppose $ \Sigma$ is isometric to a convex surface in an asymptotically flat static manifold  $\M$ such that the unit normal flow gives a convex foliation satisfying 
\[
\frac{\partial V}{\partial \nu}>0, \ det(A) + \frac{T}{2} - \bar Ric (\nu,\nu) \ge 0   \ \mathrm{and} \  det(A) >  \frac{T}{2}.
\]
Then 
\[ 
 \frac{1}{8 \pi } \int_\Sigma V (H_0 - H) \, d \sigma  \ge \sqrt{ \left(\frac{|\Sigma_h|}{16 \pi}\right) } - m 
\]
where $m$ denote the ADM mass of $\M$.
\end{proposition}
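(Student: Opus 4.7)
The plan is to mimic the Shi--Tam/Lu--Miao gluing argument, using the machinery developed in Section 3 and the asymptotic analysis of Proposition \ref{prop-af}. Identify $\Sigma$ with its convex image in $\mathcal{M}$, and let $\{\Sigma_s\}_{s \geq 0}$ be the unit normal foliation provided in the hypothesis, so that on the exterior $\mathcal{M}_{\mathrm{ext}}$ of $\Sigma_0 = \Sigma$ we have $\bar g = ds^2 + \sigma_s$. On $\mathcal{M}_{\mathrm{ext}}$ I would solve the prescribed scalar curvature equation
\[
\tilde R = \bar R + \bigl(\tfrac{1}{u^2} - 1\bigr)T
\]
for the metric $\tilde g = u^2 ds^2 + \sigma_s$ with initial data $u|_{\Sigma_0} = H_0/H$ (well-defined since $H>0$). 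The hypothesis $\det(A_0) + T/2 - \bar{Ric}(\nu,\nu) \geq 0$ is exactly what Proposition \ref{prop-af} needs to guarantee that $u$ exists for all $s \geq 0$, is smooth, satisfies $u \to 1$ with the $O(s^{-1})$ decay, and that $(\mathcal{M}_{\mathrm{ext}}, \tilde g)$ is asymptotically flat. The choice $u|_{\Sigma_0} = H_0/H$ forces $\tilde H = H_0/u = H$ along $\Sigma$, so that the gluing condition is satisfied.

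Next I would verify that $\tilde R \geq 0$. Lemma \ref{positive_scalar} gives $\bar R \geq T \geq 0$; if $u \leq 1$ then $(1/u^2-1)T \geq 0$ and $\tilde R \geq \bar R \geq 0$, while if $u > 1$ then $\tilde R = \bar R + (1/u^2 - 1) T \geq \bar R - T \geq 0$. Gluing $(M,g)$ to $(\mathcal{M}_{\mathrm{ext}}, \tilde g)$ along $\Sigma$ therefore produces an asymptotically flat manifold $(\hat M, \hat g)$ with a Lipschitz corner along $\Sigma$ across which the mean curvatures agree, and with nonnegative scalar curvature in the distributional sense. Because $\tilde g$ admits the foliation $\{\Sigma_s\}$ with strictly positive mean curvature $\tilde H = H_0/u > 0$ by convexity, no closed minimal surfaces appear in $\mathcal{M}_{\mathrm{ext}}$; together with condition (ii) this guarantees that $\Sigma_h$ is the outermost minimal surface of $\hat M$ and $|\Sigma_h|$ is preserved by the gluing.

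I then invoke the two main inputs. Corollary \ref{cor-1} applies because the foliation satisfies $\partial V/\partial \nu > 0$, $H_0 > 0$ (convexity), and $\det(A_0) > T/2$; it yields
\[
\frac{1}{8\pi}\int_\Sigma V(H_0 - H)\, d\sigma \;\geq\; \lim_{s\to\infty} \frac{1}{8\pi} \int_{\Sigma_s} V(H_0 - \tilde H)\, d\sigma_s.
\]
The asymptotic identification
\[
\lim_{s\to\infty} \frac{1}{8\pi} \int_{\Sigma_s} V(H_0 - \tilde H)\, d\sigma_s \;=\; m_{\mathrm{ADM}}(\tilde g) - m
\]
follows by the standard computation at infinity for asymptotically flat static references (using $V = 1 + O(r^{-\tau})$, $u = 1 + O(s^{-1})$, and the AF expansions of $\bar g$ and $\tilde g$); this is exactly the static analogue of the Shi--Tam formula, and is where the ADM mass $m$ of the reference enters as a subtraction. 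Finally, since $\hat g$ is asymptotically flat with a single corner along $\Sigma$ where mean curvatures match and $\Sigma_h$ is the outermost minimal surface, the Riemannian Penrose inequality for asymptotically flat manifolds with corners proved by Miao in \cite{Miao} gives
\[
m_{\mathrm{ADM}}(\tilde g) \;=\; m_{\mathrm{ADM}}(\hat g) \;\geq\; \sqrt{|\Sigma_h|/(16\pi)}.
\]
Chaining these three inequalities delivers the proposition.

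The main obstacle I expect is the bookkeeping in the asymptotic identification of the limit of $\int_{\Sigma_s} V(H_0 - \tilde H)\, d\sigma_s$ with $m_{\mathrm{ADM}}(\tilde g) - m$. This requires controlling the expansion of $V$, $u$, $H_0$ and $\tilde H$ on the foliation surfaces up to sufficient order, together with ensuring that the curvature-flow foliation is admissible in Bartnik's sense so that $m_{\mathrm{ADM}}(\tilde g)$ coincides with the usual coordinate ADM limit. The verification that $\Sigma_h$ remains the outermost minimal surface after gluing is also a point that needs care, but is handled by the foliation argument on $\mathcal{M}_{\mathrm{ext}}$ together with hypothesis (ii) on $(M,g)$.
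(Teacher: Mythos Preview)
Your proposal is correct and follows essentially the same approach as the paper: solve the prescribed scalar curvature equation with initial data matching the mean curvatures, invoke Proposition~\ref{prop-af} and Lemma~\ref{positive_scalar} for asymptotic flatness and $\tilde R\ge 0$, apply the monotonicity of Corollary~\ref{cor-1}, identify the limit as $m_{\mathrm{ADM}}(\tilde g)-m$, and finish with Miao's Penrose inequality for manifolds with corners. For the asymptotic step you flagged as the main obstacle, the paper resolves it concretely by inserting the mean curvature $\bar H$ of the isometric embedding of $\Sigma_s$ into $\R^3$ and splitting $\int_{\Sigma_s}(H_0-\tilde H)=\int_{\Sigma_s}(\bar H-\tilde H)+\int_{\Sigma_s}(H_0-\bar H)$, then applying the Fan--Shi--Tam large-sphere Brown--York limit to each piece.
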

\begin{proof}
Given the isometric embedding into $\M$, we set
\[
u = \frac{H}{H_0}
\]
on $\Sigma_0$. Using the unit normal flow, we write the metric $\bar g$ as
\[
\bar g = ds^2 + \sigma_s
\]
and solve the prescribed scalar curvature equation 
\[
\tilde R = \bar R  + (\frac{1}{u^2}-1) T. 
\]
for the metric
\[
\tilde g = u^2 ds^2  + \sigma_s.
\]
By Proposition \ref{prop-af}, $\tilde g$ is smooth and asymptotically flat. $\tilde R \ge 0$ by Lemma \ref{positive_scalar}. Using the initial value on $\Sigma_0$, we have
\[
 \frac{1}{8 \pi } \int_\Sigma V (H_0 - H) \, d \sigma  =  \frac{1}{8 \pi } \int_{\Sigma_0} V (H_0 - \tilde H) \, d \sigma 
\]
By Corollary \ref{eq-monotone}, we get
\[
 \frac{1}{8 \pi } \int_{\Sigma_0} V (H_0 - \tilde H) \, d \sigma  \ge  \lim_{s\to \infty}\frac{1}{8 \pi}  \int_{\Sigma_s} V (H_0 - \tilde H) \, d \sigma_s
\]
Note that 
\[
\lim_{s\to \infty}\frac{1}{8 \pi}  \int_{\Sigma_s} V (H_0 -\tilde H) \, d \sigma_s  =   \lim_{s\to \infty}\frac{1}{8 \pi}  \int_{\Sigma_s}  (H_0 - \tilde H) \, d \sigma_s
\]
since $V= 1+ O(s^{-\tau})$, $H_0 = \frac{2}{r} + O(s^{-1-\tau})$ for some $\tau > \frac{1}{2}$, and $u=1 + O(s^{-1}) $. For $s$ sufficiently large, the Gauss curvature for $\sigma_s$ is positive and thus there exists a unique 
isometric embedding of $\Sigma_s$  into $\R^3$. Let $\bar H$ be the mean curvature of the isometric embedding of $\Sigma_s$ into $\R^3$, we compute
\[
\begin{split}
     & \lim_{s\to \infty}\frac{1}{8 \pi}  \int_{\Sigma_s}  (H_0 - \tilde H) \, d \sigma_s  \\
  = & \lim_{s\to \infty}\frac{1}{8 \pi}  \int_{\Sigma_s}  (\bar H - \tilde H) \, d \sigma_s + \lim_{s\to \infty}\frac{1}{8 \pi}  \int_{\Sigma_s}  (H_0 - \bar H) \, d \sigma_s  \\
  = & m_{ADM}(\tilde g) - m .
 \end{split}
\]
In the last equality, we use the theorem that the large sphere limit of the Brown-York mass on an asymptotically flat initial data is the total ADM energy \cite{Fan-Shi-Tam}. Combining the above, we have
\[
 \frac{1}{8 \pi } \int_{\Sigma_0} V (H_0 - H) \, d \sigma  \ge  m_{ADM}(\tilde g) - m
\]

On the other hand, we glue together $(M,g)$ with $(M_{ext}, \tilde g)$ and apply the Penrose inequality \cite{Miao}, we conclude that 
\[
 m_{ADM}(\tilde g)  \ge \sqrt{ \left(\frac{|\Sigma_h|}{16 \pi}\right) }.
\]
The proposition follows from combining the above two inequalities together.
\end{proof}
\section{The unit normal foliation}
In this section, we study the unit normal foliation in a spherically symmetric static manifold $\M$ and derive a sufficient condition on $\Sigma_0$ such that the foliation satisfies \eqref{condition_foliation}. We observe that it suffices to show that 
\[
\frac{\partial V}{\partial \nu}>0, \ \frac{\bar R}{2}- \bar Ric (\nu,\nu)  <  0   \ \mathrm{and} \  det (A_0) >  \frac{\bar R}{2}
\]
since $\bar R \ge T.$  The second condition follows from 
\begin{equation}\label{condition_1}
 \bar Ric (\nu,\nu) < 0   
\end{equation}
since $\bar R \ge 0$. 

On the other hand,
\begin{equation}\label{condition_2}
det(A_0)>  \frac{\bar R}{2}
 \end{equation}
follows from a suitable lower bound on the principle curvature of $\Sigma_s$. 

Finally, we observe that if $\bar Ric (\partial_r, \partial_r) < 0$, $V$ is spherical symmetric and $\frac{\partial V}{\partial r}>0$, then \eqref{condition_1} implies
\begin{equation}\label{V_increase}
\frac{\partial V}{\partial \nu}>0.
\end{equation}

A spherically symmetric manifold is conformally flat. Namely, there exist a coordinate such that 
\[
\bar g = F^4(\rho) (d\rho^2 + \rho^2 dS^2).
\] 
We identify a foliation $\Sigma_s$ of $\M$ with a foliation $\tilde \Sigma_s$ of $\R^3$ using this coordinate. Under the identification, a unit normal flow is identified with a flow with a given normal speed depending on the conformal factor. This allows us to find a condition on $\Sigma_0$ to guarantee  \eqref{condition_foliation} by studying the evolution of principal curvatures and the support function of surfaces in $\R^3$. We will first illustrate this idea using the Schwarzschild manifold since the conformal factor is explicit which simplifies the presentation. In the second subsection, we discuss the flow in the Reissner-Nordstrom manifold. In the last subsection, we prove a similar result for any asymptotically flat and spherically symmetric static manifold.
\subsection{The Schwarzschild case}
Let $m>0$. Recall that the Ricci curvature of
\[
\frac{dr^2}{1- \frac{2m}{r}} + r^2 dS^2
\]
in the orthonorgmal frame $\{ e_r, e_1 ,e_2 \}$ is 
\begin{equation} \label{Ricci_Schwarzschild}
\begin{pmatrix} 
- \frac{2m}{r^3} & 0 & 0 \\
0   &  \frac{m}{r^3} & 0\\
0 & 0 &  \frac{m}{r^3} 
\end{pmatrix}.
\end{equation}
We use the isothermal coordinate for the Schwarzschild manifold 
\[
(1+ \frac{m}{2 \rho})^4 (d \rho^2 + \rho^2 dS^2)
\]
which is obtained using a change of variable $\rho = \rho(r)$.  We set
\[
F = 1+ \frac{m}{2 \rho}.
\]

We can then identify a star-shaped surface $\Sigma$ in the Schwarzschild manifold defined by 
\[
\rho = G(u^a)
\]
with the surface $\tilde \Sigma$ in $\R^3$ defined by the same equation. As a result, a foliation  $\Sigma_s$ of the Schwarzschild manifold  corresponds to a foliation $\tilde \Sigma_s$ of $\R^3$. In particular,  $\Sigma_s$ is the unit normal foliation if and only if  $\tilde \Sigma_s$ satisfies 
\begin{equation} \label{flow_conformal}
\frac{\partial X}{\partial s}= \frac{\nu}{F^2} .
\end{equation}
where $\nu$ is the unit normal of $\tilde \Sigma_s$ in $\R^3$.

On $\tilde \Sigma_s$, we consider the support function 
\[
\langle X , \nu \rangle
\]
and the principal curvature $\tilde \kappa_a$. Let 
\[
\cos \theta = \frac{\langle X , \nu \rangle}{\rho}
\]
 \begin{proposition}
 If the surface $\tilde \Sigma_0$ satisfies
 \[
 \begin{split}
 \cos \theta > & \frac{1}{\sqrt{3}}\\
\min_{a}  \tilde \kappa_a  > &  \frac{\sqrt{3}  m }{\rho^2} \\
\rho   > &   3 m, 
 \end{split}
 \]
then the unit normal flow $\Sigma_s$ from $\Sigma_0$ in the Schwarzschild manifold satisfies \eqref{condition_1}, \eqref{condition_2} and \eqref{V_increase}.
 \end{proposition}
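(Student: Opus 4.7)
The plan is to translate each of the three conditions \eqref{condition_1}, \eqref{condition_2}, \eqref{V_increase} into pointwise inequalities for the Euclidean quantities $\rho$, $\cos\theta$, and $\tilde\kappa_a$ via the conformal identification, and then propagate these inequalities along the $\R^3$ flow $\partial X/\partial s = F^{-2}\nu$.

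First I would verify the equivalences at a single time. Since the conformal map preserves normal directions up to a positive rescaling, $\langle \nu_{\M}, e_r\rangle = \cos\theta$, so the Ricci formula \eqref{Ricci_Schwarzschild} yields
\[
\bar{Ric}(\nu,\nu) = \frac{m}{r^3}\bigl(1 - 3\cos^2\theta\bigr),
\]
and hence $\cos\theta > 1/\sqrt{3}$ implies \eqref{condition_1}. Because $V = \sqrt{1 - 2m/r}$ is strictly increasing in $r$ outside the horizon, $\cos\theta > 0$ forces \eqref{V_increase}. For \eqref{condition_2}, $\bar R \equiv 0$ on the Schwarzschild manifold reduces the requirement to $\det(A_0) > 0$, and the conformal transformation of the second fundamental form
\[
\kappa_a^{\M} = \frac{1}{F^2}\Bigl(\tilde\kappa_a - \frac{2m\cos\theta}{\rho(2\rho+m)}\Bigr),
\]
together with the elementary estimate $2m\cos\theta/(\rho(2\rho+m)) < m/\rho^2 < \sqrt 3 m/\rho^2 < \tilde\kappa_a$, yields $\kappa_a^{\M} > 0$.

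Next, using $\partial_s \nu = -\nabla^\Sigma(F^{-2})$ and the fact that $F$ depends only on $\rho$, a short computation gives
\[
\partial_s \rho = \frac{\cos\theta}{F^2}, \qquad \partial_s \cos\theta = \frac{1 - m/(2\rho)}{\rho F^3}\sin^2\theta.
\]
Both right-hand sides are non-negative for $\rho > m/2$, so $\rho$ is strictly increasing and $\cos\theta$ is non-decreasing along the flow. Hence the bounds $\rho > 3m$ and $\cos\theta > 1/\sqrt 3$ propagate for free.

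The main obstacle is to preserve the lower bound $\min_a \tilde\kappa_a > \psi$ where $\psi := \sqrt 3 m/\rho^2$. The principal curvatures evolve by
\[
\partial_s \tilde\kappa_a = \nabla^2_\Sigma(F^{-2})(e_a, e_a) - F^{-2}\tilde\kappa_a^2,
\]
while the shrinking barrier satisfies $\partial_s \psi = -2\sqrt 3 m\cos\theta/(\rho^3 F^2)$. Since the evolution of $\tilde\kappa_a$ has no diffusion term, I would argue at any hypothetical first touching point $(s^*, p^*)$ of $\min_a \tilde\kappa_a$ with $\psi$: expand the surface Hessian via the identity $\nabla^2_\Sigma f = \mathrm{Hess}^{\R^3} f - h\,\partial_\nu f$ with $\mathrm{Hess}^{\R^3}\rho = (\delta - d\rho\otimes d\rho)/\rho$; substitute $\tilde\kappa_a = \psi$ together with the already-propagated bounds $\cos\theta > 1/\sqrt 3$ and $\rho > 3m$; then verify that $\partial_s(\tilde\kappa_a - \psi) > 0$ at the touching point, contradicting the first-touching assumption. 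The constants $1/\sqrt 3$, $\sqrt 3 m$, and $3m$ in the hypotheses appear to be tuned precisely so that this algebraic inequality closes with a positive margin, the delicate step being the interaction between the Hessian contribution (which mixes $f''$, $f'/\rho$, and the angle between the principal direction and the tangential part of $\hat X$) and the quadratic loss term $-F^{-2}\psi^2$. Once this last invariance is established, the equivalences from the first step give \eqref{condition_1}, \eqref{condition_2}, \eqref{V_increase} along the entire unit normal flow.
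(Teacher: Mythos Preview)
Your proposal is essentially the same approach as the paper's proof: translate the conditions to the Euclidean picture via the conformal factor, propagate $\rho$ and $\cos\theta$ along the flow $\partial_s X = F^{-2}\nu$, and then run a first-touching argument for the principal-curvature barrier $\tilde\kappa\rho^2 > \sqrt{3}m$. Two remarks: your evolution law for $\tilde\kappa_a$ has the wrong sign on the Hessian term (it should read $\partial_s\tilde\kappa_a = -\nabla^2_\Sigma(F^{-2})(e_a,e_a) - F^{-2}\tilde\kappa_a^2$, matching the paper's $\partial_s A_{ab} = -\nabla_a\nabla_b F^{-2} + F^{-2}A_a{}^cA_{cb}$), and your exact identity $\partial_s\cos\theta = (1-m/(2\rho))\rho^{-1}F^{-3}\sin^2\theta \ge 0$ is actually cleaner than the paper's inequality-plus-contradiction for that step. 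The final barrier verification you leave as a sketch is exactly what the paper computes: bounding $-D_aD_bF^{-2}\ge -mF^{-2}\rho^{-3}\sigma_{ab}$ yields $\partial_s(\tilde\kappa\rho^2)\ge (\rho F^2)^{-1}\bigl(\tfrac{2}{\sqrt3}\rho^2\tilde\kappa-\rho^3\tilde\kappa^2-m\bigr)$, which at the touching value $\tilde\kappa\rho^2=\sqrt3 m$ with $\rho>3m$ is strictly positive.
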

\begin{proof}
In the following, we restrict our discussion to the trajectory of a point on $\tilde \Sigma_0$ along the flow. By \eqref{Ricci_Schwarzschild},  \eqref{condition_1}  is the same as 
\[
\cos \theta  > \frac{1}{3}.
\]
Under the flow defined by \eqref{flow_conformal}, we have
\[
\begin{split}
\frac{d}{ds} \rho & = \frac{\cos \theta}{F^2} \\
\frac{d}{ds}  \cos \theta & \ge \frac{\sin^2 \theta}{F^2 \rho} -  |\nabla \frac{1}{F^2}| \\
 & = \frac{\sin \theta}{F^2 \rho}  (\sin \theta -  \frac{m}{F\rho})
\end{split}
\]
We claim that, if on $\Sigma_0$, we have 
\[
 \cos \theta > \sqrt{\frac{1}{3}} \ {\rm and } \ \rho > 3m,
\]
then for all $\Sigma_s$, we have
\[
 \cos \theta > \sqrt{\frac{1}{3}}.
\]
We prove by contradiction. Suppose it does not hold on all $s$. Then there is a first time $s= s_0$ that it is violated. At $s = s_0$, we have
\[
0 \ge \frac{d}{ds}  \cos \theta  \ge \frac{\sin \theta}{F^2 \rho}  (\sin \theta -  \frac{m}{F\rho})  > \frac{\sin \theta}{F^2 \rho} (\sqrt{\frac{2}{3}} - \frac{1}{3})  > 0.
\]
This is a contradiction. We conclude that $ \cos \theta > \sqrt{\frac{1}{3}}$ for all $s$. It follows that
\[
\frac{\partial V}{\partial \nu} > 0.
\]

On the other hand, \eqref{condition_2} holds if
\[
\tilde \kappa_a + 2 \frac{\partial_\nu F}{F} > 0.
\]
Since we need this for both principal curvatures, we simply write it as 
\[
\tilde \kappa > - 2 \frac{\partial_\nu F}{F}.
\]
A sufficient condition is that 
\[
\tilde \kappa  >  \frac{m}{\rho^2} .
\]
Let $A$ be the second fundamental form of $\tilde \Sigma_s$ in $\R^3$. Under the flow \eqref{flow_conformal}, we have
\[
\frac{d}{ds}  A_{ab} = - \nabla_a \nabla_b \frac{1}{F^2} +  \frac{1}{F^2}A_{ac} \sigma^{cd}A_{db}
\]
where
\[
 - \nabla_a \nabla_b \frac{1}{F^2} = - D_a D_b \frac{1}{F^2} + A_{ab}  \partial_\nu \frac{1}{F^2}.
\]
The last term is positive assuming the surface is convex and $\cos \theta > 0$. By a direct computation, we have
\[
  - D_a D_b \frac{1}{F^2} =   -\frac{m}{F^3 \rho^3 } \sigma_{ab}  + m  \frac{D_a \rho D_b \rho}{F^3 \rho^3} ( 3 - \frac{3m}{2 \rho F}) \ge -\frac{m}{F^3 \rho^3 } \sigma_{ab}  \ge -\frac{m}{F^2 \rho^3 } \sigma_{ab}.
\]
Thus we have
\[
\begin{split}
\frac{d}{ds}  ( \tilde  \kappa \rho^2)  \ge &  \frac{1}{\rho F^2} \left (    2 \rho^2 \cos \theta \tilde \kappa  - \rho^3 \tilde \kappa^2 - m \right )\\
 \ge & \frac{1}{\rho F^2} \left (    \frac{2}{\sqrt{3}} \rho^2 \tilde  \kappa  - \rho^3 \tilde \kappa^2 - m \right )
\end{split}
\]
where $ \cos \theta > \frac{1}{\sqrt{3}}$ is used in the last inequality. We claim that if 
\[
\begin{split}
\tilde \kappa \rho^2 >& \sqrt{3}  m \\
 \rho > & 3 m
 \end{split}
 \]
on $\tilde \Sigma_0$. Then 
\[
\tilde  \kappa \rho^2 > \sqrt{3}  m
\]
on all $\tilde \Sigma_s$. By the previous discussion on $\rho$, it suffices to show that the condition
\[
\tilde \kappa \rho^2 > \sqrt{3}  m 
\]
will be preserved along the flow. We prove by contradiction. Suppose it does not hold for all $s$. Then there is a first time $s= s_0$ that it is violated. At $s = s_0$, we have
\[
\tilde  \kappa \rho^2 = \sqrt{3}  m
\]
with 
\[
\frac{d}{ds } \tilde \kappa \rho^2 \le 0.
\]
However, we compute at $s=s_0$
\[
\begin{split}
\frac{d}{ds}  (\tilde  \kappa \rho^2)   > & \frac{1}{\rho F^2} \left (    \frac{2}{\sqrt{3}} \rho^2 \tilde  \kappa  - \rho^3 \tilde \kappa^2 - m \right )\\
 = & \frac{1}{\rho^2 F^2} \left (    \frac{2}{\sqrt{3}} \rho^2  \tilde \kappa - \frac{(\rho^2\tilde \kappa)^2}{\rho}- m \right )  \\
  = & \frac{1}{\rho^2 F^2} \left (    2m  - \frac{3 m^2}{\rho}- m \right ).  \\
\end{split} 
\]
 The last line is positive if $\rho > 3 m $. This is a contradiction. This finishes the proof.
\end{proof}
We formulate the above conditions in terms of conditions on $\Sigma_0$.
 \begin{corollary}
 If the surface $\Sigma_0$ satisfies 
 \[
 \begin{split}
- \bar Ric (\nu,\nu)  >  &0\\
\min_{a}  \kappa_a  > &  \frac{\sqrt{3}  m }{\rho^2} \\
\rho   > &   3 m,
 \end{split}
 \]
then the unit normal flow  $\Sigma_s$ from $\Sigma_0$ in the Schwarzschild manifold satisfies \eqref{condition_1}, \eqref{condition_2} and \eqref{V_increase}.
 \end{corollary}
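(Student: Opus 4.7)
The plan is to reduce the corollary to the preceding proposition by showing that each hypothesis on $\Sigma_0 \subset \M$ translates directly into the corresponding hypothesis on the identified Euclidean surface $\tilde\Sigma_0 \subset \R^3$. Since the Schwarzschild metric is conformally flat in the isothermal chart ($\bar g = F^4 \delta$ with $F = 1 + m/(2\rho)$) and conformal changes preserve angles, the angle $\theta$ between $\nu$ and $\partial_\rho$ coincides whether measured in $\bar g$ or in $\delta$; the $\theta$ appearing in the proposition therefore refers to the same geometric quantity as the angle computed on $\Sigma_0$ using $\bar g$.

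To convert the Ricci hypothesis, write $\nu = \cos\theta\, e_r + \sin\theta\, e_\perp$ in the orthonormal frame $\{e_r, e_1, e_2\}$; formula \eqref{Ricci_Schwarzschild} then gives
\[
\bar{Ric}(\nu,\nu) = \frac{m}{r^3}\bigl(1 - 3\cos^2\theta\bigr),
\]
so the hypothesis $-\bar{Ric}(\nu,\nu) > 0$ is exactly $\cos\theta > 1/\sqrt{3}$. For the principal curvature hypothesis, the standard conformal transformation formula for the second fundamental form (derived, e.g., from the change of the Levi-Civita connection under $\bar g = F^4\delta$) gives
\[
F^2 \kappa_a \;=\; \tilde\kappa_a + \frac{2\,\partial_{\tilde\nu} F}{F},
\]
where $\tilde\kappa_a$ denotes the Euclidean principal curvature on the identified surface $\tilde\Sigma_0$. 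Since $F$ is strictly decreasing in $\rho$ and $\cos\theta > 1/\sqrt{3}$ forces $\tilde\nu$ to have a strictly positive radial component, $\partial_{\tilde\nu} F < 0$; together with $F > 1$ this yields $\tilde\kappa_a > F^2 \kappa_a \ge \kappa_a > \sqrt{3}m/\rho^2$. The remaining hypothesis $\rho > 3m$ is identical in both settings, and the radial monotonicity $\partial V/\partial r > 0$ needed for \eqref{V_increase} is immediate from $V = \sqrt{1 - 2m/r}$ once $r > 2m$, which is ensured by $\rho > 3m$. Applying the preceding proposition then yields the conclusion.

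I expect no genuine obstacle here: the corollary is essentially a translation exercise, and the only care needed is to track the sign of $\partial_{\tilde\nu} F$ via the monotonicity of $F$, and to verify that the $\theta$ computed in the Schwarzschild orthonormal frame coincides with the $\theta$ used in the proposition, which is exactly the content of the conformal invariance of angles.
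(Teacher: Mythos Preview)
Your proof is correct and follows the same approach as the paper's own argument: reduce to the preceding proposition by translating each hypothesis on $\Sigma_0$ into the corresponding hypothesis on $\tilde\Sigma_0$. The paper's proof is terser---it simply asserts that $\cos\theta>1/\sqrt{3}$ is equivalent to $\bar{Ric}(\nu,\nu)<0$, that $\rho$ coincides on $\Sigma_0$ and $\tilde\Sigma_0$, and that the Schwarzschild principal curvatures are smaller than the Euclidean ones---whereas you supply the explicit conformal transformation formula and verify the sign of $\partial_{\tilde\nu}F$; but the content is the same.
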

\begin{proof}
We mentioned before that $\cos \theta >  \frac{1}{\sqrt{3}}$ is the same as $\bar Ric (\nu,\nu) <  0$. $\rho$ takes the same value on $\Sigma_0$ and  $\tilde \Sigma_0$. Finally, the principal curvature in the Schwarzschild manifold is smaller than the principal curvature in $\R^3$.
\end{proof}
 \subsection{The Reissner-Nordstrom case}
 In this subsection, let $(\M,\bar g)$ be the Reissner-Nordstrom manifold  with mass $m>0$ and charge $e$. In spherical coordinate, the metric is 
\[
-(1 - \frac{2m}{r} + \frac{e^2}{r^2}) dt^2 + \frac{d r^2}{1 - \frac{2m}{r} + \frac{e^2}{r^2}} + r^2 dS^2.
\]
and the Ricci curvature in the orthonorgmal frame $\{ e_r, e_1 ,e_2 \}$ is 
\begin{equation} \label{Ricci_Reissner-Nordstrom}
\begin{pmatrix} 
- \frac{2m}{r^3}  + \frac{e^2}{r^4}& 0 & 0 \\
0   &  \frac{m}{r^3} & 0\\
0 & 0 &  \frac{m}{r^3} 
\end{pmatrix}
\end{equation}

Similar to the last subsection, we express the metric in terms of the isothermal coordinate
 \[
 \bar g = F^4(\rho)(d \rho^2 + \rho^2 dS^2).
 \]
by a change of variable $\rho = \rho (r)$. While the change of variable is not explicit, we can still check that
\[
\partial_\rho F < 0.
\]

Using the isothermal coordinate, we identify a surface $\Sigma$ in the Reissner-Nordstrom manifold with a surface $\tilde \Sigma$ in $\R^3$ as in the last subsection.
\begin{proposition}\label{foliation_creterion}
There exists constant $C_1 > 0$ and $C_2>0$ (depending only on $m$ and $e$) such that  if the surface $\Sigma_0$ satisfies
\[
 - \bar Ric (\nu, \nu)  > 0 
\]
and the surface $\tilde \Sigma_0$ satisfies that 
 \[
 \begin{split}
\min_a \tilde \kappa_a  > &  \frac{C_1 }{\rho^2} \\
\rho   > &  C_2,
 \end{split}
 \]
 then the unit normal flow $\Sigma_s$ from $\Sigma_0$ in the Reissner-Nordstrom manifold satisfies \eqref{condition_1}, \eqref{condition_2} and \eqref{V_increase}. 
 \end{proposition}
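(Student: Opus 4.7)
The plan is to mirror the Schwarzschild argument of subsection 4.1, replacing the explicit conformal factor by quantitative asymptotic estimates derived from the implicit definition of $\rho(r)$. First I pass to isothermal coordinates $\bar g = F^4(\rho)(d\rho^2 + \rho^2 dS^2)$, where $F = F(\rho)$ is determined by the ODE $F^2 d\rho = (1 - 2m/r + e^2/r^2)^{-1/2} dr$ together with the normalization $F \to 1$ at infinity. Although $F$ is no longer elementary in the Reissner-Nordstrom case, asymptotic flatness yields
\[
F(\rho) = 1 + \tfrac{m}{2\rho} + O(\rho^{-2}), \qquad \partial_\rho F = O(\rho^{-2}), \qquad \partial_\rho^2 F = O(\rho^{-3}),
\]
with constants depending only on $m$ and $e$, and the sign $\partial_\rho F < 0$ is already recorded in the excerpt. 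Under the conformal identification $\Sigma_s \leftrightarrow \tilde\Sigma_s$ in $\R^3$, the angle between $\nu$ and $\partial_r$ is invariant, and the unit normal flow in $\M$ becomes $\partial_s X = F^{-2}\nu$ in $\R^3$.

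I next translate the three conditions \eqref{condition_1}, \eqref{condition_2}, and \eqref{V_increase}. By \eqref{Ricci_Reissner-Nordstrom} in the orthonormal frame,
\[
\bar{Ric}(\nu,\nu) = \frac{1}{r^3}\Bigl(m - \bigl(3m - \tfrac{e^2}{r}\bigr)\cos^2\theta\Bigr),
\]
so \eqref{condition_1} is equivalent to $\cos^2\theta > \beta^2(r) := m/(3m - e^2/r)$. Since $\beta(r)$ is decreasing in $r$ and $r$ is nondecreasing along the flow (as $d\rho/ds = F^{-2}\cos\theta$), it suffices to preserve $\cos\theta > \beta(r_0)$ pointwise. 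Because $V$ depends only on $r$ with $\partial_r V > 0$ outside the horizon, \eqref{V_increase} then follows from $\cos\theta > 0$. For \eqref{condition_2}, using the bound $\bar R = O(r^{-4})$ together with the conformal relation $\kappa_a = F^{-2}\tilde\kappa_a + 2F^{-3}\partial_\nu F$, a lower bound $\tilde\kappa_a \rho^2 \ge C_1$ implies $\det A_0 \ge \bar R/2 \ge T/2$ once $\rho$ is sufficiently large.

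Invariance of the open set $\{\cos\theta > \beta(r_0),\ \tilde\kappa_a \rho^2 > C_1,\ \rho > C_2\}$ is then established by three parallel barrier arguments. The radius is monotone, and for the angle
\[
\frac{d\cos\theta}{ds} \ge \frac{\sin\theta}{F^2 \rho}\left(\sin\theta - \frac{2\rho|\partial_\rho F|}{F}\right);
\]
by the asymptotic $2\rho|\partial_\rho F|/F = m/\rho + O(\rho^{-2})$, the right-hand side is strictly positive at any first putative violation $\cos\theta = \beta(r_0)$ once $C_2$ is large enough in $m,e$. For the principal curvatures, the evolution
\[
\frac{dA_{ab}}{ds} = -D_a D_b(F^{-2}) + A_{ab}\partial_\nu(F^{-2}) + F^{-2} A_{ac}\sigma^{cd} A_{db},
\]
combined with a Hessian bound $-D_a D_b(F^{-2}) \ge -C(m,e)\,F^{-3}\rho^{-3}\sigma_{ab}$ extracted from the asymptotics of $F$, $\partial_\rho F$, $\partial_\rho^2 F$, yields
\[
\frac{d}{ds}(\tilde\kappa\rho^2) \ge \frac{1}{\rho F^2}\bigl(2\beta(r_0)\rho^2 \tilde\kappa - \rho^3\tilde\kappa^2 - C(m,e)\bigr),
\]
which is strictly positive at the first violation $\tilde\kappa\rho^2 = C_1$ once $C_1$ and $C_2$ are chosen sufficiently large in $m,e$. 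Contradiction in each case closes the argument.

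The main obstacle is purely quantitative. Every step that was transparent in the Schwarzschild proof, the explicit $F = 1 + m/(2\rho)$, the closed-form Hessian bound on $F^{-2}$, and the numerical threshold $1/\sqrt{3}$, must be replaced by estimates tied to the implicit conformal factor. Controlling $\partial_\rho^2 F$ requires differentiating the defining ODE for $\rho(r)$ and tracking the charge contribution, and one must verify that the resulting bounds are uniform on the full admissible range of $\rho$, not merely as $\rho \to \infty$. The constants $C_1$ and $C_2$ of the statement emerge from this quantitative tracking, and their dependence on $m$ and $e$ reflects the combined effect of the charge in both the Ricci correction (via $\beta(r)$) and the Hessian bound on $F^{-2}$.
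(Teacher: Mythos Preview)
Your proposal is correct and follows essentially the same route as the paper. The only cosmetic differences are that the paper packages the derivative bounds on the conformal factor into three abstract constants $C_3,C_4,C_5$ (valid on the whole exterior, not just asymptotically) rather than extracting them from an expansion of $F$, and in the curvature barrier the paper passes from $\cos\theta>\beta(r_0)$ to the universal bound $\cos\theta>1/\sqrt{3}$ before plugging into the evolution of $\tilde\kappa\rho^2$, whereas you keep $\beta(r_0)$; both choices lead to the same contradiction at the threshold once $C_1,C_2$ are taken large in $m,e$.
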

\begin{proof}
On $\tilde \Sigma_s$, define
\[
\cos \theta = \frac{\langle X , \nu \rangle}{\rho}
\]
As before, $\cos \theta > 0$ implies that 
\[
\frac{\partial V}{\partial \nu} > 0. 
\]
From \eqref{Ricci_Reissner-Nordstrom}, $ - \bar Ric (\nu, \nu) > 0 $ if and only if 
\[
\cos \theta > \sqrt{ \frac{m   }{3m- \frac{e^2}{r(\rho)}}}.
\]
Let 
\[
G(\rho) = \sqrt{ \frac{m   }{3m- \frac{e^2}{r(\rho)}}}.
\]
It is a decreasing function of $\rho$.

Let $C_3$, $C_4$ and $C_5$ be positive constant such that 
\[
\begin{split}
|D \frac{1}{F^2}|  < &\frac{C_3}{F^2 \rho^2+1} \\
2 \frac{ | DF |}{F}  + F^2 \sqrt{R(\bar g)}  < &  \frac{C_4}{\rho^2+1} \\
D_a D_b \frac{1}{F^2} < & \frac{C_5}{\rho^3F^2+1}.
\end{split}
\]

In the following, we restrict our discussion to the trajectory of a point on $\tilde \Sigma_0$ along the flow. Let $\rho_0$ be the value of $\rho$ at the point we choose. Under the flow with normal speed $\frac{1}{F^2}$, we have
\[
\begin{split}
\frac{d}{ds} \rho & = \frac{\cos \theta}{F^2} \\
\frac{d}{ds}  \cos \theta & \ge \frac{\sin^2 \theta}{F^2 \rho} - |\nabla \frac{1}{F^2}| \\
 &  \ge \frac{\sin \theta}{F^2 \rho}  (\sin \theta -  \frac{C_3}{\rho})
\end{split}
\]
Thus, if on $\tilde \Sigma_0$, we have
\[
\begin{split}
\cos \theta > & G (\rho_0) \\
\rho  > & \frac{C_3}{\sqrt{1- G^2(\rho_0)}},
\end{split}
\]
then we claim that $\rho$ is increasing in $s$ and
\[
 \cos \theta >  G(\rho_0)
\]
holds on all $\Sigma_s$. Suppose it does not hold on all $s$. Then there is a first time $s= s_0$ that it is violated. At $s = s_0$, we have
\[
0 \ge \frac{d}{ds}  \cos \theta  \ge \frac{\sin \theta}{F^2 \rho}  (\sqrt{1- G^2(\rho_0)}-  \frac{C_3}{\rho})   > 0
\]
which is a contradiction. We conclude that $\cos \theta >  G (\rho_0) $. In particular, we have
\[
 \cos \theta >  \frac{1}{\sqrt{3}}.
\]

On the other hand, \eqref{condition_2} holds if
\[
\frac{\tilde \kappa_a}{F^2} + 2 \frac{\partial_\nu F}{F^3} > \sqrt{R(\bar g)}.
\]
A sufficient condition is that 
\[
\tilde \kappa_a >  \frac{C_4}{\rho^2} 
\]
for both principal curvature. In the following, we denote it simply by $\tilde \kappa$. We compute as before that
\[
\begin{split}
\frac{d}{ds}  (\tilde \kappa \rho^2)  \ge &  \frac{1}{\rho F^2} \left (    2 \rho^2 \cos \theta \tilde \kappa  - \rho^3\tilde \kappa^2 - C_5 \right )\\
> &  \frac{1}{\rho F^2} \left (   \frac{ 2}{\sqrt{3}} \rho^2 \tilde\kappa  - \rho^3 \kappa^2 - C_5 \right ).
\end{split}
\]
Set
\[
C = \max \{ C_4, C_5\}.
\]
Using the same proof by contradiction as the Schwarzschild case, we conclude that if 
\begin{equation}
\begin{split} 
\tilde \kappa \rho^2 >& \sqrt{3}  C \\
 \rho > & 3 C
 \end{split}
 \end{equation}
 on $\tilde \Sigma_0$, then the same holds on all $\tilde \Sigma_s$. As a result, the proposition is true if we set
 \[
 \begin{split}
 C_1  = &  \sqrt{3} \max \{ C_4, C_5\}  \\
 C_2  = &  \max\{  \frac{C_3}{\sqrt{1- (\max G)^2}}, \sqrt{3} C_4, \sqrt{3} C_5 \},
 \end{split}
 \]
where we take the maximum of $G$ among points outside the horizon. 
 \end{proof}
As a corollary, we have the following:
\begin{corollary}\label{corollary_foliation}
There exists constant $C_6 > 0$ and $C_7>0$ such that  if $\Sigma_0$ satisfies
\[
 \begin{split}
 - \bar Ric (\nu, \nu)  > & 0  \\
\min_a  \kappa_a  > &  \frac{C_6}{r^2}, \\
r  > &  C_7
 \end{split}
 \]
 then the unit normal flow $\Sigma_s$ from $\Sigma_0$ in the Reissner-Nordstrom manifold satisfies \eqref{condition_1}, \eqref{condition_2} and \eqref{V_increase}. 
 \end{corollary}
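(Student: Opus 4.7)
The plan is to reduce Corollary \ref{corollary_foliation} to Proposition \ref{foliation_creterion} by translating the hypotheses. The first hypothesis, $-\bar Ric(\nu,\nu)>0$, appears verbatim in both statements and is an intrinsic condition on $\Sigma_0$ that does not depend on which coordinate system we use, so no translation is needed there. The other two hypotheses differ: the corollary is stated in terms of the area-radius coordinate $r$ and the principal curvatures $\kappa_a$ of $\Sigma_0$ inside $(\M,\bar g)$, whereas the proposition uses the isothermal coordinate $\rho$ and the Euclidean principal curvatures $\tilde\kappa_a$ of the identified surface $\tilde\Sigma_0\subset\R^3$. The task is to choose constants $C_6,C_7$ depending only on $m$ and $e$ so that the corollary's hypotheses imply those of Proposition \ref{foliation_creterion}.

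The translation relies on two elementary observations. First, $r=F^2\rho$ where $F$ is the isothermal conformal factor, and since $F\to 1$ at infinity with $\partial_\rho F<0$ (and $F\ge 1$), there is a constant $C>0$ depending only on $m,e$ with $\rho\ge r-C$ whenever $r$ is large enough; thus $r>C_2+C$ forces $\rho>C_2$, where $C_2$ is the constant from Proposition \ref{foliation_creterion}. Second, the standard conformal-change formula for the shape operator applied to $\bar g=F^4\delta$ yields
\[
\tilde\kappa_a \;=\; F^2\kappa_a \;-\; 2\,\frac{\partial_\nu F}{F},
\]
where $\nu$ is the Euclidean outer unit normal of $\tilde\Sigma_0$. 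Since $F'(\rho)<0$ while $\cos\theta>0$ (this positivity is forced by the hypothesis $-\bar Ric(\nu,\nu)>0$, exactly as the bound $\cos\theta>G(\rho)>0$ appears in the proof of Proposition \ref{foliation_creterion}), we have $\partial_\nu F=F'(\rho)\cos\theta\le 0$, so $\tilde\kappa_a\ge F^2\kappa_a\ge\kappa_a>C_6/r^2=C_6/(F^4\rho^2)$. Because $F\to 1$, once $r$ is large enough we have $F^4\le 2$, and then $\tilde\kappa_a>C_6/(2\rho^2)\ge C_1/\rho^2$ provided $C_6\ge 2C_1$, where $C_1$ is the other constant from Proposition \ref{foliation_creterion}.

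The proof is completed by taking $C_6=2C_1$ and $C_7$ equal to the maximum of $C_2+C$ and the asymptotic threshold guaranteeing $F^4\le 2$; Proposition \ref{foliation_creterion} then delivers that the unit normal flow satisfies \eqref{condition_1}, \eqref{condition_2}, and \eqref{V_increase}. I expect the only mild obstacle to be bookkeeping: one must extract quantitative bounds on $F$ and $F'$ from the implicit change of variables $\rho=\rho(r)$ defining the isothermal coordinate of the Reissner-Nordstrom manifold, but this is a routine ODE estimate in the far region and enters only through the existence, not the explicit values, of $C_6$ and $C_7$.
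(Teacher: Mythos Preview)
Your proposal is correct and follows essentially the same approach as the paper: both reduce the corollary to Proposition \ref{foliation_creterion} by comparing $r$ with $\rho$ via $r=F^2\rho$ (hence $\rho=r+O(1)$) and by comparing $\kappa_a$ with $\tilde\kappa_a$ via the conformal-change formula for the shape operator. Your write-up is in fact more detailed than the paper's, which simply records the two comparisons without deriving them; note also that the paper's displayed inequality ``$\kappa_a>\tilde\kappa_a$'' appears to be a typo for $\tilde\kappa_a>\kappa_a$, which is the direction you correctly establish and which is consistent with the Schwarzschild case treated earlier in the paper.
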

\begin{proof} 
As before, we have 
\[
 \kappa_a > \tilde \kappa_a.
\]
Moreover, $\rho (r)$ is an increasing function of $r$ such that 
\[
\rho = r + O(1).
\]
In particular, for $\rho > C_2$, there exists $C > c > 0$ such that 
\[
C r > \rho > cr.
\]
\end{proof}
Combining Proposition \ref{local_penrose_pre} and Corollary \ref{corollary_foliation}, we obtain the following theorem:
\begin{theorem}  
Let  $(M, g)$ be a compact, connected,  orientable, 3-manifold  with nonnegative scalar curvature, with boundary $\partial M$. 
Suppose the boundary is the union of  $ \Sigma $ and $ \Sigma_h$, where 
\begin{enumerate}
\item[(i)]   $\Sigma$ has positive mean curvature $H > 0$; and  
\item[(ii)]  $\Sigma_h$,  if nonempty, is a minimal surface and there are no other closed  minimal surfaces in $ (M, g)$.  
\end{enumerate}
There exists constant $C_6$ and $C_7$ depending only on the mass $m$ and charge $e$ of the reference Reissner-Nordstrom manifold
such that if $ \Sigma$ is isometric to a convex surface in the Reissner-Nordstrom manifold with principle curvature $\kappa_a$  and 
\[
\begin{split}
 -  \bar Ric (\nu,\nu) > & 0\\
\min \kappa_a > &  \frac{C_6}{r^2}\\
r > &  C_7.
\end{split}
\]
Then 
\[ 
 \frac{1}{8 \pi } \int_\Sigma V (H_0 - H) \, d \sigma  \ge \sqrt{ \left(\frac{|\Sigma_h|}{16 \pi}\right) } - m.
\]
\end{theorem}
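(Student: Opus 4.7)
The plan is to reduce the theorem to the two main ingredients already established, namely Proposition \ref{local_penrose_pre} (the general quasi-local Penrose inequality in terms of a unit normal foliation satisfying \eqref{condition_foliation}) and Corollary \ref{corollary_foliation} (a concrete sufficient condition on the initial surface $\Sigma$ in the Reissner-Nordstrom manifold guaranteeing that its unit normal flow produces such a foliation). In other words, the theorem is essentially a packaging step: verify that the hypotheses imposed on $\Sigma$ in the statement coincide with (or imply) the hypotheses required by Corollary \ref{corollary_foliation}, and then feed the resulting foliation into Proposition \ref{local_penrose_pre}.

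First I would fix the constants $C_6$ and $C_7$ to be precisely the constants produced by Corollary \ref{corollary_foliation}, which depend only on $m$ and $e$ through the conformal factor $F$ and the curvature bounds used in the Reissner-Nordstrom isothermal presentation. Given $\Sigma$ isometrically embedded as a convex surface in the Reissner-Nordstrom manifold satisfying $-\bar Ric(\nu,\nu)>0$, $\min_a \kappa_a > C_6/r^2$, and $r>C_7$, Corollary \ref{corollary_foliation} directly yields that the unit normal flow starting from $\Sigma_0=\Sigma$ produces a convex foliation $\{\Sigma_s\}$ of the exterior region satisfying \eqref{condition_1}, \eqref{condition_2} and \eqref{V_increase}. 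Combined with the observation made at the start of Section 4 that \eqref{condition_1}, \eqref{condition_2}, \eqref{V_increase} imply \eqref{condition_foliation} (using $\bar R \geq T \geq 0$ from Lemma \ref{positive_scalar}), the foliation meets the assumptions of Proposition \ref{local_penrose_pre}.

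Having verified the foliation hypothesis, I would then invoke Proposition \ref{local_penrose_pre} directly to conclude
\[
\frac{1}{8\pi}\int_\Sigma V(H_0-H)\,d\sigma \ge \sqrt{\frac{|\Sigma_h|}{16\pi}} - m,
\]
which is exactly the desired inequality. No further analysis of the quasi-spherical metric or the monotonicity formula is needed here, since those were already carried out in Section 3 and incorporated into Proposition \ref{local_penrose_pre}.

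There is essentially no obstacle at this stage: all analytic work (solving the prescribed scalar curvature equation \eqref{prescribed}, deriving the monotonicity of $\int_{\Sigma_s} V(H_0-\tilde H)\,d\sigma_s$, extracting the asymptotic behavior as $s\to\infty$ and identifying the limit with $m_{ADM}(\tilde g)-m$, and finally applying the Penrose inequality with corners from \cite{Miao}) has already been done in Sections 3 and 4, and the foliation hypothesis has been reduced to explicit bounds on $\Sigma$ by Corollary \ref{corollary_foliation}. The only mildly delicate point is bookkeeping: the constants $C_6,C_7$ in the statement must be taken to be exactly those produced by Corollary \ref{corollary_foliation}, which in turn depend on the constants $C_1,\dots,C_5$ controlling the Reissner-Nordstrom conformal factor $F$ and the coordinate change $\rho=\rho(r)$; confirming that all these constants depend only on $m$ and $e$ (and not on $\Sigma$ itself) is what makes the packaging legitimate.
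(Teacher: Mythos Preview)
Your proposal is correct and matches the paper's approach exactly: the paper states this theorem immediately after Corollary \ref{corollary_foliation} with the single sentence ``Combining Proposition \ref{local_penrose_pre} and Corollary \ref{corollary_foliation}, we obtain the following theorem,'' and gives no further argument. Your only slip is referring to ``the start of Section 4'' for the reduction of \eqref{condition_foliation} to \eqref{condition_1}--\eqref{V_increase}; that discussion is at the start of Section 5.
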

\subsection{Spherically symmetric static manifold}
First, we define precisely the conditions we need for the static manifold. 
\begin{definition}
$\M$ is called a spherically symmetric reference manifold if it is a complete, asymptotically flat and spherically symmetric static manifold where the coordinate spheres are strictly mean convex, with possibly a minimal surface boundary. Furthermore, we assume that 
\[
\frac{\partial V}{\partial r} > 0,  \   \frac{\partial F}{\partial r} < 0  \ {\rm and}  \  \bar Ric (\partial_r,\partial_r) < 0,
\]
where $r$ is the area radius, $V$ is the static potential and $F$ is the conformal factor.
\end{definition}
\begin{remark}
By spherical symmetry, we also require that $V$ is a spherical symmetric function. Recall we also need that $V$ approaching to $1$ at infinity. This holds if $\M$ is the static slice of a spherical symmetric and asymptotically flat spacetime. 
\end{remark}
\begin{remark}
If there is a coordinate sphere with negative mean curvature, we can simply look at the region of $\M$ outside the outermost minimal surface.
\end{remark}

We identify surfaces $\Sigma_s$ in $\M$ with surfaces $\tilde \Sigma_s$ in $\R^3$ as before. From $ \bar Ric (\partial_r,\partial_r) < 0$, $\bar R >0$ and the spherical symmetry, we conclude that
\[
\bar Ric (\nu,\nu)  < 0
\]
in and only if
\[
\cos \theta  > G(\rho) > 0
\]
for some function $G(\rho)<1$. We define the following:
\begin{definition}
We say that a surface $\Sigma_0$ in $\M$ satisfies the tangent angle condition if 
\[
\cos \theta  > \max_{R \ge \rho_0}G(R).
\]
where $\rho_0$ denotes the restriction on $\rho$ to $\Sigma_0$.
\end{definition}
\begin{remark}
If $G$ is decreasing in $\rho$, then the tangent angle condition is the same as
\[
\bar Ric (\nu,\nu)  < 0.
\]
\end{remark}

We are now ready to state the quasi-local Penrose inequality
\begin{theorem}  
Suppose $\M$ is a spherically symmetric reference manifold. Let  $(M, g)$ be a compact, connected,  orientable, 3-manifold  with nonnegative scalar curvature, with boundary $\partial M$. 
Suppose the boundary is the union of  $ \Sigma $ and $ \Sigma_h$, where 
\begin{enumerate}
\item[(i)]   $\Sigma$ has positive mean curvature $H > 0$; and  
\item[(ii)]  $\Sigma_h$,  if nonempty, is a minimal surface and there are no other closed  minimal surfaces in $ (M, g)$.  
\end{enumerate}
There exists constant $C_8$ and $C_9$ depending only on $\M$ such that if $ \Sigma$ is isometric to a convex surface in $\M$ with principle curvature $\kappa_a$  such that
\[
\begin{split}
\min \kappa_a  > &  \frac{C_8}{r^2}\\
r > &  C_9
\end{split}
\]
and the tangent angle condition holds on $\Sigma_0$, then 
\[ 
 \frac{1}{8 \pi } \int_\Sigma V (H_0 - H) \, d \sigma  \ge \sqrt{ \left(\frac{|\Sigma_h|}{16 \pi}\right) } - m.
\]
where $m$ denote the ADM mass of $\M$.
\end{theorem}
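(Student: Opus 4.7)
The plan is to reduce everything to Proposition~\ref{local_penrose_pre} combined with a verbatim extension of the Reissner-Nordstrom argument from Proposition~\ref{foliation_creterion} to the general class of spherically symmetric reference manifolds defined above. Proposition~\ref{local_penrose_pre} already reduces the quasi-local Penrose inequality to verifying that the unit normal flow starting from $\Sigma_0$ produces a convex foliation satisfying $\partial V/\partial \nu > 0$, $\det(A_0) + T/2 - \bar{Ric}(\nu,\nu) \ge 0$, and $\det(A_0) > T/2$. As noted at the start of Section~4, since $\bar R \ge T \ge 0$ and the reference is spherically symmetric with $\partial V/\partial r > 0$ and $\bar{Ric}(\partial_r,\partial_r) < 0$, those three conditions follow from \eqref{condition_1}, \eqref{condition_2}, and \eqref{V_increase}. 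The whole task is therefore to produce constants $C_8, C_9$ depending only on $\M$ so that the unit normal flow preserves the tangent angle condition together with an appropriate lower bound on the principal curvatures.

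I would then pass to the isothermal coordinate $\bar g = F^4(\rho)(d\rho^2 + \rho^2 dS^2)$ provided by spherical symmetry, identify $\Sigma_s \subset \M$ with $\tilde\Sigma_s \subset \R^3$, and rewrite the unit normal flow in $\M$ as a flow in $\R^3$ with normal speed $1/F^2$. Asymptotic flatness together with $\partial F/\partial r < 0$ yields three constants $C_3, C_4, C_5$ (depending only on $\M$) bounding $|D(1/F^2)|$, $2|DF|/F + F^2 \sqrt{\bar R}$, and $D_a D_b(1/F^2)$ in exactly the form used in the proof of Proposition~\ref{foliation_creterion}. The radial function $G(\rho)$ from the tangent angle condition is defined intrinsically from the Ricci curvature via spherical symmetry, and is bounded above by a constant strictly less than $1$ on the region of interest.

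With these ingredients, the two contradiction arguments in the proof of Proposition~\ref{foliation_creterion} carry over essentially verbatim. Along the flow one has $d\rho/ds = \cos\theta/F^2 > 0$, and
\[
\frac{d}{ds}\cos\theta \ge \frac{\sin\theta}{F^2 \rho}\left(\sin\theta - \frac{C_3}{\rho}\right)
\]
shows that once $\cos\theta > \max_{R \ge \rho_0} G(R)$ initially and $\rho_0$ is chosen large enough in terms of $C_3$ and the upper bound on $G$, the tangent angle condition, and hence \eqref{condition_1} and \eqref{V_increase}, is preserved for all $s$. Running the parallel contradiction argument for the evolution of $\tilde\kappa \rho^2$ under the $1/F^2$-speed flow in $\R^3$, and using $\cos\theta > 1/\sqrt{3}$, yields that $\tilde\kappa \rho^2$ stays above $\sqrt{3}\max\{C_4, C_5\}$ for all $s$, which produces \eqref{condition_2}.

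The last step is to translate the hypotheses back from the Euclidean surface $\tilde\Sigma_0$ to $\Sigma_0 \subset \M$: asymptotic flatness gives $\rho = r + O(1)$, so the area radius $r$ and the isothermal radius $\rho$ are comparable outside a compact set, and a direct conformal-change computation relates the principal curvatures of $\Sigma_0$ and $\tilde\Sigma_0$, producing constants $C_8, C_9$ with the claimed dependence. The main obstacle is not any single estimate but the extraction of the uniform asymptotic bounds $C_3, C_4, C_5$ from the abstract notion of a spherically symmetric reference manifold without explicit formulas; this is precisely where the strengthened tangent angle condition (taking the supremum of $G$ over $R \ge \rho_0$, rather than the value at $\rho_0$) becomes important, since it furnishes a fixed barrier against which $\cos\theta$ can be compared along a flow whose radius is monotonically increasing.
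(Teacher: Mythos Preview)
Your proposal is correct and follows essentially the same approach as the paper: reduce to Proposition~\ref{local_penrose_pre}, then rerun the Reissner--Nordstrom argument of Proposition~\ref{foliation_creterion} with constants $C_3,C_4,C_5$ extracted from the conformal factor of the general reference manifold, the only modification being the strengthened tangent angle condition $\cos\theta > \max_{R\ge\rho_0}G(R)$ to compensate for $G$ not being known to decrease. One small caveat: the specific bound $\cos\theta > 1/\sqrt{3}$ you invoke in the principal-curvature step is particular to Schwarzschild/Reissner--Nordstrom and should in general be replaced by the positive lower bound coming from the tangent angle condition, with the constants $C_8,C_9$ adjusted accordingly; the paper's own proof is equally terse on this point.
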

\begin{proof}
The proof is almost identical to the Reissner-Nordstrom case in the last subsection. However, we do not know that $G(\rho)$ is decreasing and thus we assume 
\[
\cos \theta  > \max_{R \ge r}G(R)
\]
on $\Sigma_0$. We get constants corresponding to $C_3$, $C_4$ and $C_5$ which depend on the conformal factor $F$ of $\M$. The rest of the proof is the identical by tracking the inequalities along the flow.
\end{proof}


\begin{thebibliography}{99}
\bibitem{ADM} R. Arnowitt, S. Deser\ and\ C. W. Misner, The dynamics of general relativity, in {\it Gravitation: An introduction to current research}, 227-265, Wiley, New York.
\bibitem{Bartnik}R. Bartnik, Quasi-spherical metrics and prescribed scalar curvature, J. Differential Geom. {\bf 37} (1993), no.~1, 31-71.
\bibitem{Bray} H. L. Bray, {\it Proof of the Riemannian Penrose inequality using the positive mass theorem}, J. Differential Geom. {\bf 59} (2001), no.~2, 177-267.
\bibitem{Beig} R. Beig\ and\ P. T. Chrusciel, Killing initial data, Classical Quantum Gravity {\bf 14} (1997), no.~1A, A83-A92.
\bibitem{BY} J. D. Brown\ and\ J. W. York, Jr., Quasilocal energy in general relativity, in {\it Mathematical aspects of classical field theory (Seattle, WA, 1991)}, 129-142, Contemp. Math., 132, Amer. Math. Soc., Providence, RI.
\bibitem{Chen-Wang-Wang-Yau} P.-N. Chen, M.-T. Wang, Y.-K. Wang and S.-T. Yau, \textit{Quasi-local energy with respect to a static spacetime}, Adv. Theor. Math. Phys. {\bf 22} (2018), No. 1 1--23
\bibitem{Chen-Zhang} P.-N. Chen and X. Zhang, \textit{A rigidity theorem for surfaces in Schwarzschild manifold}, Int. Math. Res. Not. IMRN, rny155, https://doi.org/10.1093/imrn/rny155
\bibitem{Fan-Shi-Tam} X.-Q. Fan, Y. Shi\ and\ L.-F. Tam, \textit{Large-sphere and small-sphere limits of the Brown-York mass}, Comm. Anal. Geom. {\bf 17} (2009), no.~1, 37-72. 
\bibitem{HH} S. W. Hawking\ and\ G. T. Horowitz, \textit{The gravitational Hamiltonian, action, entropy and surface terms}, Classical Quantum Gravity {\bf 13} (1996), no.~6, 1487-1498. 
\bibitem{Li-Wang} C. Li\ and\ Z. Wang \textit{The Weyl problem in warped product space}, arXiv:1603.01350
\bibitem{Liu-Yau1} C.-C. M. Liu\ and\ S.-T. Yau, \textit{Positivity of quasilocal mass}, Phys. Rev. Lett. {\bf 90} (2003), no.~23, 231102, 4 pp. 
\bibitem{Liu-Yau2} C.-C. M. Liu\ and\ S.-T. Yau, \textit{Positivity of quasi-local mass. II}, J. Amer. Math. Soc. {\bf 19} (2006), no.~1, 181-204. 
\bibitem{Huisken-Ilmanen}G. Huisken\ and\ T. Ilmanen, {\it The inverse mean curvature flow and the Riemannian Penrose inequality}, J. Differential Geom. {\bf 59} (2001), no.~3, 353-437. 
\bibitem{Lu-Miao} S. Lu and P. Miao, \textit{ Minimal hypersurfaces and boundary behavior of compact manifolds with nonnegative scalar curvature}, to appear in J. Differential Geom.
\bibitem{Lu-Miao1} S. Lu and P. Miao, \textit{ Variation and rigidity of quasi-local mass}, arXiv:1802.10070
\bibitem{Miao} P. Miao, \textit{On a localized Riemannian Penrose inequality}, Comm. Math. Phys. {\bf 292} (2009), no.~1, 271-284. 
\bibitem{Nirenberg} L. Nirenberg, \textit{The Weyl and Minkowski problems in differential geometry in the large}, Comm. Pure Appl. Math. {\bf 6} (1953), 337-394.
\bibitem{Penrose} R. Penrose, \textit{Gravitational collapse: the role of general relativity}, Gen. Relativity Gravitation {\bf 34} (2002), no.~7, 1141-1165
\bibitem{Pogorelov} A. V. Pogorelov, \textit{Regularity of a convex surface with given Gaussian curvature}, Mat. Sbornik N.S. {\bf 31(73)} (1952), 88-103. 
\bibitem{SY1} R. Schoen\ and\ S. T. Yau, \textit{On the proof of the positive mass conjecture in general relativity}, Comm. Math. Phys. {\bf 65} (1979), no.~1, 45-76.
\bibitem{SY2} R. Schoen\ and\ S. T. Yau, \textit{Proof of the positive mass theorem. II}, Comm. Math. Phys. {\bf 79} (1981), no.~2, 231-260.
\bibitem{Shi-Wang-Yu} Y. Shi, W. Wang and H. Yu \textit{ On the Rigidity of Riemannian-Penrose Inequality for Asymptotically Flat 3-manifolds with Corners}, Math. Z. (2018). https://doi.org/10.1007/s00209-018-2095-0
\bibitem{Shi-Tam}Y. Shi\ and\ L.-F. Tam, Positive mass theorem and the boundary behaviors of compact manifolds with nonnegative scalar curvature, J. Differential Geom. {\bf 62} (2002), no.~1, 79-125.
\bibitem{Szabados} L. B. Szabados, \textit{Quasi-Local Energy-Momentum and Angular Momentum in General Relativity}, Living Rev. Relativity {\bf 12},  (2009),  4.
\bibitem{Wang-Yau1} M.-T. Wang and S.-T. Yau, \textit{Quasilocal mass in general relativity}, Phys. Rev. Lett. \textbf{102} (2009), no. 2, no. 021101.
\bibitem{Wang-Yau2} M.-T. Wang and S.-T. Yau, \textit{Isometric embeddings into the Minkowski space and new quasi-local mass}, Comm. Math. Phys. \textbf{288} (2009), no. 3, 919-942.
\bibitem{W} E. Witten, \textit{A new proof of the positive energy theorem}, Comm. Math. Phys. {\bf 80} (1981), no.~3, 381-402. 
\end{thebibliography}
\end{document}